\newcommand{\supp}{\mathop{\mathrm{supp}}\nolimits}
\begin{document}

\numberwithin{equation}{section}

\theoremstyle{definition}
\newtheorem{theorem}{Theorem}[section]
\newtheorem{proposition}[theorem]{Proposition}
\newtheorem{formula}{Formula}[section]
\newtheorem{lemma}[theorem]{Lemma}
\newtheorem{corollary}[theorem]{Corollary}

\newtheorem{remark}{Remark}[section]
\newtheorem{definition}{Definition}[section]
\newtheorem{example}{Example}[section]
\newtheorem{problem}{Problem}[section]
\newtheorem{convention}{Convention}[section]
\newtheorem{conjecture}{Conjecture}[section]

\title[Finitely generated simple groups of homeomorphisms of $S^1$]{A new construction of uncountably many finitely generated simple groups of homeomorphisms of the circle}
\author[M. Kato]{Motoko Kato}
\thanks{The author is supported by JSPS KAKENHI Grant number 20K14311, 25K17256 and JST, ACT-X Grant Number JPMJAX200A, Japan.}
\address[M. Kato]{Faculty of Education, University of the Ryukyus, 1 Sembaru, Nishihara-Cho, Nakagami-Gun, Okinawa, 903-0213 Japan}
\email{katom@cs.u-ryukyu.ac.jp}      

\maketitle
\begin{abstract}
The notion of chain groups of homeomorphisms of $\mathbb{R}$ was introduced by Kim, Koberda and Lodha as a generalization of Thompson's group $F$.
In this paper, we study an $S^1$-version of chain groups: ring groups.
Specifically, we study the simplicity of the commutator subgroups of ring groups. 
We show that a ring group with a prechain subgroup acting minimally on its support has a simple commutator subgroup.
We also study isometric actions of ring groups on $\mathbb{R}$-trees.
We give a construction of ring groups such that for every fixed point-free isometric action on an $\mathbb{R}$-tree, there exists an invariant line upon which the group acts by translations.
In other words, such ring groups have property A$\mathbb{R}$. 
We also confirm that there exist uncountably many finitely generated simple groups in the group of orientation preserving homeomorphisms of $S^1$,
which are commutator subgroups of ring groups.
\end{abstract}

\section{Introduction}
{\it Thompson's groups} $F$ and $T$ are finitely presented groups consisting of homeomorphisms of the unit interval $[0,1]$ and the circle $S^1$, respectively.
These groups were originally studied by Richard Thompson in the 1960s, and are known to have many interesting properties.
There exist various generalizations of Thompson's groups, such as Higman-Thompson groups $F_n$ and $T_n$ (\cite{Brown}).
Among these generalizations, chain groups of homeomorphisms of $\mathbb{R}$ were introduced by Kim, Koberda and Lodha (\cite{KKL}).
A {\it prechain group} is generated by a finite sequence of orientation preserving homeomorphisms of $\mathbb{R}$ whose supports form a ``chain'' of open intervals.
A prechain group is called a {\it chain group} if subgroups generated by two of the generators are isomorphic to $F$ when the supports have nonempty intersection.
In particular, every $2$-chain group is isomorphic to $F$.
Higman-Thompson groups $F_n$ are chain groups. In fact, by taking sufficiently large power of each generator, the power subgroup of a chain group is isomorphic to $F_n$ (\cite{KKL}).
Since the idea of chain groups can be applied to homeomorphisms of one-manifolds, we can consider an ``$S^1$-version'' of chain groups, which is called ring groups.
A {\it prering group} is generated by a sequence of orientation preserving homeomorphisms of $S^1$ whose supports form a ``ring'' of open intervals in $S^1$.
A prering group is called a {\it ring group} if subgroups generated by two of the generators are isomorphic to $F$ when the supports have nonempty intersection.
Taking generators with consecutive indices, we may define a {\it prechain subgroup} (resp.\ chain subgroup) of a prering group (resp.\ ring group).
It is known that $T_n$ can be regarded as ring groups (\cite{Kato2023}), but the diversity of the ring groups compared to $T_n$ has not been investigated.

In this paper, we study some properties of commutator subgroups of ring groups to find similarities between ring groups and $T_n$.
First, we construct a sufficient condition for a group to have a simple commutator subgroup. 
This theorem is not limited to ring groups, but can be applied to more general situations.
\begin{theorem}\label{Higman1_lem}
Let $G$ be a group acting faithfully on a set $X$.
Let $S$ be a generating set of $G$.
Suppose that 
every $s_1, s_2\in S\setminus \{1\}$ and every $g\in G\setminus \{1\}$ satisfy
either 
there exists $u\in G$ such that 
\begin{align}\label{Higman1_eq}
\left( \supp(s_1)\cup \supp(s_2)\right) \cap (u g u^{-1})\left( \supp(s_1)\cup \supp(s_2)\right)=\emptyset,
\end{align}
or
there exist $u_1, u_2\in G$ such that 
\begin{align}\label{Higman2_eq}
\left( \supp(s_1)\cup (u_1 g u_1^{-1})\supp(s_2)\right) \cap (u_2 g u_2^{-1})\left( \supp(s_1)\cup (u_1g u_1^{-1})\supp(s_2)\right)=\emptyset.
\end{align}
Then for every normal subgroup $N\neq 1$ of $G$, $N$ contains the commutator subgroup $[G,G]$ of $G$.
\end{theorem}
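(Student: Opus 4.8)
The plan is to show that the quotient $G/N$ is abelian, which immediately yields $[G,G]\subseteq N$. Since $S$ generates $G$, the images of $S$ generate $G/N$, so it suffices to prove that $[s_1,s_2]\in N$ for every pair $s_1,s_2\in S\setminus\{1\}$ (pairs involving $1$ give trivial commutators). I would fix once and for all a nontrivial element $g\in N$, which exists because $N\neq 1$; since $N$ is normal, every conjugate $ugu^{-1}$ lies in $N$, and these conjugates are exactly the elements appearing in hypotheses \eqref{Higman1_eq} and \eqref{Higman2_eq}.

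The heart of the argument is a displacement lemma: if $v\in N$ and $a,b\in G$ satisfy $\left(\supp(a)\cup\supp(b)\right)\cap v\left(\supp(a)\cup\supp(b)\right)=\emptyset$, then $[a,b]\in N$. To prove it, set $a_v:=vav^{-1}$, whose support is $v\supp(a)$ and is therefore disjoint from $\supp(b)$; by faithfulness of the action, disjointness of supports forces $b$ and $a_v$ to commute in $G$. Now put $\alpha:=[a,v]=a\,(vav^{-1})^{-1}=a\,a_v^{-1}$, which lies in $N$ because $v\in N$. A direct computation gives $b\alpha b^{-1}=(bab^{-1})a_v^{-1}$, so that $[b,\alpha]=b\alpha b^{-1}\alpha^{-1}=(bab^{-1})a^{-1}=[a,b]^{-1}$. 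Since $\alpha\in N$ and $N$ is normal, $[b,\alpha]\in N$, whence $[a,b]\in N$. This is the step I expect to carry the most weight, and the only place faithfulness is used, through the facts that the support of a conjugate is the image of the support and that elements with disjoint supports commute.

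With the lemma in hand, the two hypotheses are handled as follows. If \eqref{Higman1_eq} holds for the pair $s_1,s_2$, I apply the lemma with $a=s_1$, $b=s_2$, and $v=ugu^{-1}\in N$, obtaining $[s_1,s_2]\in N$ at once. If instead \eqref{Higman2_eq} holds, I set $v_1=u_1gu_1^{-1}\in N$ and apply the lemma with $a=s_1$, $b=v_1s_2v_1^{-1}$, and $v=u_2gu_2^{-1}\in N$; this yields $[s_1,v_1s_2v_1^{-1}]\in N$. The remaining point is that conjugation by $v_1\in N$ is invisible modulo $N$: since $v_1s_2v_1^{-1}s_2^{-1}=[v_1,s_2]\in N$, the images of $v_1s_2v_1^{-1}$ and $s_2$ in $G/N$ coincide, so $[s_1,v_1s_2v_1^{-1}]\in N$ forces $[s_1,s_2]\in N$ as well.

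Combining the two cases, every pair of generators commutes in $G/N$; as the images of $S$ generate $G/N$, the quotient is abelian and therefore $[G,G]\subseteq N$. The main obstacle is isolating the correct commutator identity in the displacement lemma, so that only a single displacement $v\in N$ (a conjugate of $g$) is needed and faithfulness can be invoked cleanly; once that identity is set up, feeding the two disjointness hypotheses into it—and absorbing the auxiliary conjugation of case \eqref{Higman2_eq} modulo $N$—is routine.
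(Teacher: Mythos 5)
Your proposal is correct and takes essentially the same approach as the paper: fix $g\in N\setminus\{1\}$, use the displacement hypothesis to make a conjugate of one generator (by an element of $N$) commute with the other, and extract $[s_1,s_2]\in N$ from a commutator identity involving conjugates of that element. Your displacement lemma (the identity $[b,[a,v]]=[a,b]^{-1}$ when $v\in N$ separates the supports) and your reduction of case \eqref{Higman2_eq} modulo $N$ are just clean repackagings of the paper's explicit expansion of $[s_1,s_2]$ into conjugates of $g'$ and its choice $g'=(u_2gu_2^{-1})(u_1gu_1^{-1})$ in the second case.
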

\noindent Theorem~\ref{Higman1_lem} is a variation of Higman's theorem (\cite[Theorem 1]{Higman}).
Higman's theorem states that for a group $G$ acting faithfully on a set $X$, if for every $g_1, g_2, g_3\in G\setminus \{1\}$,
there exists $u\in G$ satisfying 
\begin{align*}
\left( \supp(g_1)\cup \supp(g_2)\right) \cap (u g_3 u^{-1})\left( \supp(g_1)\cup \supp(g_2)\right)=\emptyset,
\end{align*}
then $[G,G]$ is simple.

Second, using Theorem~\ref{Higman1_lem}, we show the simplicity of commutator subgroups of ring groups under an assumption on minimality of a chain subgroup. 
\begin{theorem}\label{simple_thm}
Let $G$ be a ring group with a prechain subgroup whose action on its support is minimal.
Then the commutator subgroup $[G,G]$ of $G$ is simple.
\end{theorem}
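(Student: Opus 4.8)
The plan is to deduce simplicity by applying Theorem~\ref{Higman1_lem} not to $G$ itself but to the commutator subgroup $H:=[G,G]$, equipped with the restriction of the faithful action of $G$ on $S^1$. Since $H$ is a subgroup of the faithfully acting group $G$, its action on $S^1$ is again faithful. If I can exhibit a generating set $S'$ of $H$ for which the disjointness alternative of Theorem~\ref{Higman1_lem} holds for all $s_1,s_2\in S'\setminus\{1\}$ and all $g\in H\setminus\{1\}$, with the conjugating elements taken inside $H$, then the theorem yields that every nontrivial normal subgroup of $H$ contains $[H,H]$. Granting in addition that $H$ is perfect, i.e.\ $[H,H]=H$, this forces every nontrivial normal subgroup of $H$ to equal $H$, which is exactly the simplicity of $[G,G]$. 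Thus the proof splits into two tasks: \emph{(a)} perfectness of $[G,G]$, and \emph{(b)} verification of the support-disjointness alternative inside $[G,G]$. The naive shortcut of applying Theorem~\ref{Higman1_lem} to $G$ alone does not suffice, since that only shows normal subgroups of $G$ contain $[G,G]$, which in general does not imply simplicity of $[G,G]$; hence the argument must be run entirely within $H$.

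For task \emph{(a)}, I would use that each two-generator subgroup $\langle f_i,f_{i+1}\rangle$ with overlapping supports is isomorphic to Thompson's group $F$, whose commutator subgroup is perfect (indeed simple). These local perfect subgroups lie in $[G,G]$, and because their supports overlap consecutively around the ring, a propagation argument should pass perfectness from the local pieces to all of $[G,G]$; minimality of the prechain subgroup action is what guarantees that these pieces interact richly enough to generate $H$.

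For task \emph{(b)}, which I expect to be the crux, I would choose $S'$ to consist of elements of $[G,G]$ supported in proper sub-arcs of $S^1$ — for instance commutators of the ring generators and their $H$-conjugates, each supported in a union of a few consecutive support-arcs $I_i$. For such $s_1,s_2$ the set $W=\supp(s_1)\cup\supp(s_2)$ is contained in a proper arc. Given a nontrivial $g\in H$, minimality of the prechain subgroup's action on its support lets me find $u\in H$ compressing a conjugate $u^{-1}W$ into a region of $\supp(g)$ on which $g$ displaces it, realizing the single-conjugate alternative \eqref{Higman1_eq}. The genuinely delicate cases are those where $g$ has small or awkwardly placed support relative to the circle, so that no single conjugate can push all of $W$ off itself; here the $S^1$ wrap-around obstructs the $\mathbb{R}$-style argument, and this is exactly where the two-conjugate alternative \eqref{Higman2_eq} is meant to help. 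In those cases I would first use one conjugate $u_1 g u_1^{-1}$ to relocate $\supp(s_2)$ next to $\supp(s_1)$, shrinking the relevant set, and then displace the smaller region $\supp(s_1)\cup(u_1 g u_1^{-1})\supp(s_2)$ by a second conjugate $u_2 g u_2^{-1}$. The main obstacle is to carry out this case analysis uniformly over all nontrivial $g\in H$ while keeping every conjugating element inside $[G,G]$, since the passage from ``normal subgroups contain $[G,G]$'' to genuine simplicity hinges on verifying the hypotheses of Theorem~\ref{Higman1_lem} within $H$ rather than within $G$.
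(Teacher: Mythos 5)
Your skeleton coincides with the paper's: apply Theorem~\ref{Higman1_lem} to $H=[G,G]$ with a generating set of conjugated commutators, verify the disjointness alternatives using compression, and combine with perfectness of $[G,G]$. But the compression step, which you treat as routine, is the technical heart and is left unproved. The hypothesis only gives minimality of \emph{one prechain subgroup on its own support}, a proper arc of $S^1$; what your task \emph{(b)} actually requires is CO-transitivity of the $[G,G]$-action on \emph{all of} $S^1$. Bridging this gap is the paper's Proposition~\ref{minimal_ring_prop} (after \cite[Lemma 3.2.1]{HLR}): a proper compact set $K$ is first moved into the support of an $(m-1)$-prechain subgroup via Lemma~\ref{for-minimal_ring_prop} (with a separate argument, working with $S^1\setminus K$, when $K$ is not contained in any such support), then into the support of the minimal prechain subgroup, where Lemma~\ref{minimal_chain_lem}(3) applies; your phrase ``minimality \ldots lets me find $u\in H$ compressing'' elides all of this. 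Moreover, your case analysis is triggered by the wrong condition. Your claim that for your choice of $S'$ the set $W=\supp(s_1)\cup\supp(s_2)$ ``is contained in a proper arc'' is false: two proper open arcs can cover $S^1$, and that is precisely the situation \eqref{Higman2_eq} exists to handle. The paper's dichotomy is on whether $\supp(s'_1)\cup\supp(s'_2)=S^1$, and is independent of $g$: for \emph{any} $g'\neq 1$ there is an interval $J$ with $J\cap g'J=\emptyset$, so whenever the union of supports is proper the single-conjugate alternative goes through regardless of how ``small or awkwardly placed'' $\supp(g')$ is. (Your description of the two-conjugate mechanism itself is correct and matches the paper: one shows $u_1g'u_1^{-1}\supp(s'_2)\subset\supp(s'_1)$, reducing case (2) to case (1).)

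On perfectness, you dismiss applying Theorem~\ref{Higman1_lem} to $G$ itself as a ``naive shortcut,'' but that first application is exactly how the paper obtains task \emph{(a)}: the second derived subgroup $G''$ is a nontrivial normal subgroup of $G$, hence contains $[G,G]$, giving $[G,G]=G''$; the second application, to $[G,G]$, then shows every nontrivial normal subgroup of $[G,G]$ contains $G''=[G,G]$. Your alternative route via perfectness of $[F,F]$ can be made rigorous --- each $[r_i,r_{i+1}]$ lies in the second derived subgroup of $\langle r_i,r_{i+1}\rangle\cong F$, hence in $G''$, and $G''$ is normal in $G$, so the normal closure $[G,G]$ lies in $G''$ --- but as written it is only a sketch, and your appeal to minimality for this step is misplaced: no minimality is needed there. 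Finally, a small but real defect: you take $S'$ to consist of commutators of the ring generators and their \emph{$H$-conjugates}; the subgroup these generate is normal in $H$ but not visibly in $G$, so it need not be all of $[G,G]$. The paper takes all \emph{$G$-conjugates} $g[r_i,r_{i+1}]g^{-1}$ (each still lying in the normal subgroup $[G,G]$), which certainly generate $[G,G]$, while the conjugators $u,u_1,u_2$ demanded by Theorem~\ref{Higman1_lem} are produced inside $[G,G]$ by CO-transitivity, as you correctly require.
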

\noindent Theorem~\ref{simple_thm} is similar to \cite[Theorem 3.2.2]{HLR}, which demonstrates the simplicity of the commutator subgroup when the action of the ring group on $S^1$ is minimal.

Third, we discuss isometric actions of ring groups on $\mathbb{R}$-trees. 
\begin{theorem}\label{AR_thm}
Let $H$ be a $2$-generated subgroup of $\mathrm{Homeo}^+(\mathbb{R})$.
There exists a ring group $G$ such that 
\begin{itemize}
\item[(i)] $H$ embeds into a chain subgroup of $G$ which acts minimally on its support, and
\item[(ii)] for every non-trivial action on an $\mathbb{R}$-tree, there exists a line in the $\mathbb{R}$-tree upon which the group acts by translations.
That is, $G$ has property A$\mathbb{R}$.
\end{itemize}
\end{theorem}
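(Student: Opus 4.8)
The plan is to realize $H$ inside a carefully designed ring group and then to analyze $\mathbb{R}$-tree actions by combining the normal subgroup structure from Theorems~\ref{Higman1_lem} and~\ref{simple_thm} with the absence of nonabelian free subgroups in the two-generator copies of $F$. For (i), write $H=\langle a,b\rangle$ with $a,b\in\mathrm{Homeo}^+(\mathbb{R})$. Identifying $\mathbb{R}$ with an open arc, I would use a Kim--Koberda--Lodha type construction to place $a,b$ inside a chain group $C$ acting on that arc: choose generators $f_1,\dots,f_k$ whose supports form a chain, arrange the overlaps so that consecutive generators generate copies of $F$ and so that $a,b$ are realized as words in the $f_i$, and choose the germs so that $C$ acts minimally on the union of its supports (arranged by making the generators move points of each overlap densely, so that no proper closed subset is invariant). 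I would then close the chain into a ring by adjoining further generators $f_{k+1},\dots,f_n$ whose supports complete a ring of open intervals covering $S^1$, again with consecutive overlaps generating $F$. Taking $n$ large, the resulting ring group $G=\langle f_1,\dots,f_n\rangle$ contains $C$ as a chain subgroup acting minimally on its support and into which $H$ embeds, giving (i); in particular Theorems~\ref{Higman1_lem} and~\ref{simple_thm} apply to $G$.

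For (ii), let $G$ act isometrically on an $\mathbb{R}$-tree $T$ with no global fixed point. The crux is the claim that $[G,G]$ must fix a point of $T$. Granting this, $T':=\mathrm{Fix}([G,G])$ is a nonempty subtree which is $G$-invariant because $[G,G]\trianglelefteq G$; on $T'$ the group acts through the abelian quotient $G^{ab}$, and it has no global fixed point there, since a fixed point of $G$ in $T'\subseteq T$ would be one in $T$. For a finitely generated abelian group acting on an $\mathbb{R}$-tree without a global fixed point I would argue in the standard way: some element $a$ is hyperbolic, for otherwise all elements are elliptic, and since commuting elliptic isometries have intersecting fixed-point sets the Helly property for subtrees yields a common fixed point; then every element, commuting with $a$, preserves the axis of $a$ and acts on it by a translation (a commuting hyperbolic shares the axis, a commuting elliptic fixes it pointwise). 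This produces the invariant line on which $G$ acts by translations, which is property A$\mathbb{R}$.

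To establish the crux, suppose $G$ acts on $T$. If the action is not faithful then by Theorem~\ref{Higman1_lem} the kernel contains $[G,G]$, which then fixes every point, so I may assume the action is faithful. The key inputs are that for each pair of consecutive generators $\langle f_i,f_{i+1}\rangle\cong F$ contains no nonabelian free subgroup, whereas two hyperbolic isometries of an $\mathbb{R}$-tree with transverse axes generate a free group in sufficiently high powers; hence consecutive hyperbolic generators cannot have transverse axes. Moreover nonconsecutive generators commute, so two such hyperbolic generators share a common axis, and a hyperbolic generator together with a commuting elliptic one forces the latter to fix that axis pointwise. For the large ring of part (i) the commutation graph is rich enough that, when all generators are hyperbolic, these relations propagate to a single common axis $A$; then $G$ preserves $A$ and acts on it by translations, so the faithful action would be abelian, contradicting that $G$ is nonabelian. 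In the remaining configurations I would show $G$ cannot avoid a fixed point: if every element is elliptic then all products are elliptic as well, so any two generator fixed-point sets meet and the finite family of them has a common point by Helly, a global fixed point; and if $G$ fixes an end of $T$ then the associated Busemann homomorphism $G\to\mathbb{R}$ kills $[G,G]$, so all of $[G,G]$ is elliptic and fixes that end, which is the case I must still rule out.

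The main obstacle is precisely this last configuration: a fixed-point-free action in which $[G,G]$ consists entirely of elliptic elements fixing a common end, with no invariant line. Here I expect the minimality hypothesis on the chain subgroup $C$ to be essential: minimality produces, for each generator, conjugates with pairwise disjoint supports, hence commuting copies inside $[G,G]$ whose products cannot all remain elliptic unless their fixed-point sets actually meet in a segment rather than merely escaping to the end. Making this dichotomy precise --- so that either the fixed-point sets stabilize to a genuine common fixed ray, yielding an invariant line on which $G$ acts by translations, or some product becomes hyperbolic, contradicting the vanishing of the Busemann homomorphism on $[G,G]$ --- is the technical heart of the argument, and it is in order to guarantee enough such commuting conjugates that the ring is taken large and the chain subgroup minimal in the construction of part (i).
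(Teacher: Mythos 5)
Your proposal for part (ii) takes a genuinely different route from the paper --- a direct case analysis of isometries of the $\mathbb{R}$-tree (elliptic versus hyperbolic, Helly for fixed-point sets, axes of commuting hyperbolics, Busemann homomorphisms) --- but it has a genuine gap, which you yourself flag: the configuration in which the action fixes an end of $T$, the Busemann homomorphism kills $[G,G]$, and $[G,G]$ consists entirely of elliptic elements fixing that end. Your sketched ``dichotomy'' for this case (fixed-point sets either stabilize to a common ray or some product of commuting conjugates becomes hyperbolic) is not an argument; it is precisely the hard content, and nothing in your construction of $G$ (a large ring with a minimal chain subgroup) is shown to feed into it. A secondary slip: in the all-generators-hyperbolic case you derive a common axis $A$ on which $G$ acts by translations and call this a ``contradiction with faithfulness,'' but it is no contradiction --- elements of $[G,G]$ fix $A$ pointwise while possibly moving points off $A$, so faithfulness on $T$ is untouched. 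Indeed an invariant line with a translation action is the \emph{desired conclusion} of property A$\mathbb{R}$, not an absurdity, and conflating the two suggests the case structure of your argument is not sound as stated. Also, the ``propagation'' of shared axes through the commutation graph of the ring generators is asserted, not proved; consecutive generators generate copies of $F$, which rules out transverse axes in high powers (Brin--Squier plus ping-pong), but axes sharing a ray, coinciding, or one generator being elliptic are configurations your sketch does not actually resolve.

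The paper proceeds quite differently and avoids the tree-side case analysis entirely. For (i) it invokes a specific result of Kim--Koberda--Lodha (Theorem~\ref{embedding_thm}): adjoining the explicit maps $a(x)=x+1$ and the piecewise map $b$ to $H$ yields a $4$-chain group acting minimally on its support and satisfying the precise boundary conditions $\partial_{+}\supp(f_i)=\partial_{-}\supp(f_{i+2})$ and $f_{i+1}f_i(\partial_{-}\supp(f_{i+1}))=\partial_{-}\supp(f_{i+2})$; it then transplants this chain to $S^1$ and adjoins a single fifth generator $r_5$, chosen so the resulting $5$-ring satisfies the same boundary conditions cyclically. For (ii) it applies the Culler--Vogtmann criterion (Theorem~\ref{CV_thm}) to the enlarged generating set $S=\{r_i,r'_i\}$, where each $r'_i$ is a conjugate of $r_i$ whose support is squeezed into $\supp(r_{i+2})\cap\supp(r_{i+3})$; the boundary conditions are exactly what make the required commutations and minipotent-word relations $[r_{i+2}^{-1}r'_ir_{i+2},r'_i]=1$ and $[r_{i+3}^{-1}r'_ir_{i+3},r'_i]=1$ verifiable, so that $\Delta(G,S)$ is complete and each conjugacy class contains the $\Delta'$-connected dense subset $\{r_i,r'_i\}$. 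The CV criterion is precisely the tool that disposes of the end-fixing case wholesale, which is where your direct approach stalls; if you want to salvage your route, you would essentially have to reprove that part of Culler--Vogtmann's theorem. As written, the proposal is incomplete.
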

\noindent In the proof of Theorem~\ref{AR_thm}, we construct a candidate of $G$ and show that such a group satisfies (i) and (ii). 
The assumption (i) follows from a result on embeddings of $n$-generated subgroups of $\mathrm{Homeo}^+(\mathbb{R})$ into chain groups (\cite{KKL}). 
The main part of the proof is to verify (ii): $G$ has property A$\mathbb{R}$. We use a result by Culler and Vogtmann.
They gave a group-theoretic criterion for a group to have property A$\mathbb{R}$ as conditions on a set of generators of the group (\cite{CV}). 

Finally, we have the following corollary. 
\begin{corollary}\label{uncountable2_cor}
There exist uncountably many isomorphism types of commutator subgroups of the ring groups given in Theorem~\ref{AR_thm}.
In particular, there exists a family of finitely generated simple subgroups of $\mathrm{Homeo}^+(S^1)$ containing uncountably many isomorphism types of groups.
\end{corollary}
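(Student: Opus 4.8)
The plan is to deduce the corollary from Theorems~\ref{simple_thm} and \ref{AR_thm} by a cardinality argument. First I would fix an uncountable family $\{H_\alpha\}_{\alpha\in A}$ of pairwise non-isomorphic $2$-generated subgroups of $\mathrm{Homeo}^+(\mathbb{R})$; it is a standard fact that such a family exists (there are uncountably many isomorphism types of $2$-generated left-orderable groups, and every countable left-orderable group acts faithfully on $\mathbb{R}$ by orientation-preserving homeomorphisms). Applying Theorem~\ref{AR_thm} to each $H_\alpha$ yields a ring group $G_\alpha\le\mathrm{Homeo}^+(S^1)$ satisfying (i) and (ii); write $C_\alpha$ for the chain subgroup from (i), so that $H_\alpha\hookrightarrow C_\alpha$ and $C_\alpha$ acts minimally on its support. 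Since $C_\alpha$ is in particular a prechain subgroup acting minimally, Theorem~\ref{simple_thm} applies and gives that $[G_\alpha,G_\alpha]$ is simple. A ring group is finitely generated, so each $G_\alpha$, and hence each $[G_\alpha,G_\alpha]$, is countable. Property A$\mathbb{R}$ from (ii) is not needed for the counting itself; it is part of the package ensuring that these simple groups are exactly the commutator subgroups of the groups produced in Theorem~\ref{AR_thm}.

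The heart of the argument is the following counting step, for which I would first establish that $H_\alpha$ embeds into $[G_\alpha,G_\alpha]$. Granting this, recall that a countable group has only countably many $2$-generated subgroups and therefore realizes at most countably many isomorphism types of $2$-generated subgroups. Fix an isomorphism type $K$. For every $\alpha$ with $[G_\alpha,G_\alpha]\cong K$, the isomorphism type of $H_\alpha$ occurs among the countably many isomorphism types of $2$-generated subgroups of $K$; since the $H_\alpha$ are pairwise non-isomorphic, only countably many such $\alpha$ can exist. Thus every fiber of the map sending $\alpha$ to the isomorphism type of $[G_\alpha,G_\alpha]$ is countable, and since $A$ is uncountable its image is uncountable. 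This proves the first assertion. For the ``in particular'' clause it then remains to note that each $[G_\alpha,G_\alpha]$ is a subgroup of $\mathrm{Homeo}^+(S^1)$ (as $G_\alpha$ is), is simple by Theorem~\ref{simple_thm}, and is finitely generated.

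The main obstacle is precisely the input to the counting step: upgrading the embedding $H_\alpha\hookrightarrow G_\alpha$ of Theorem~\ref{AR_thm}(i) to an embedding $H_\alpha\hookrightarrow[G_\alpha,G_\alpha]$, together with the finite generation of $[G_\alpha,G_\alpha]$. Detecting $H_\alpha$ inside $G_\alpha$ alone would not suffice, since a priori distinct $G_\alpha$ could share a single commutator subgroup. I would resolve both points through the ring structure of the constructed groups, aiming to show that the chain subgroup $C_\alpha$ already lies in $[G_\alpha,G_\alpha]$ --- exactly as the standard copy of Thompson's group $F$ lies in the perfect group $T=[T,T]$. Concretely, using ring elements that carry the support arc of $C_\alpha$ off itself, one would express each generator of $C_\alpha$ as a product of commutators in $G_\alpha$, giving $H_\alpha\le C_\alpha\le[G_\alpha,G_\alpha]$; in the most favorable case the constructed $G_\alpha$ is itself perfect, whence $[G_\alpha,G_\alpha]=G_\alpha$ is finitely generated outright, and more generally finite generation follows once one checks that the ring relations force $G_\alpha^{\mathrm{ab}}$ to be finite, so that $[G_\alpha,G_\alpha]$ has finite index in the finitely generated group $G_\alpha$ and is finitely generated by Reidemeister--Schreier. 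Verifying these structural facts for the groups of Theorem~\ref{AR_thm} is where the real work lies; once they are in place, the cardinality bookkeeping above is routine.
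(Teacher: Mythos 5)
Your counting skeleton is exactly the paper's: start from an uncountable family of pairwise non-isomorphic $2$-generated subgroups (Theorem~\ref{uncountable_thm}), feed each into Theorems~\ref{AR_thm} and \ref{simple_thm}, and use the fact that a countable group realizes at most countably many isomorphism types of $2$-generated subgroups, so each fiber of $\alpha\mapsto[G_\alpha,G_\alpha]$ is countable. But the two steps you yourself flag as ``where the real work lies'' are precisely the content of the paper's proof, and the routes you sketch for them are different from the paper's and would likely fail. For the embedding $H_\alpha\hookrightarrow[G_\alpha,G_\alpha]$, you propose to show that the whole chain subgroup $C_\alpha$ lies in $[G_\alpha,G_\alpha]$, ``as $F$ lies in $T$''. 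Nothing in the construction supports this: ring groups come with no presentation, so there is no argument that the ring generators die in the abelianization (for chain groups the generators survive --- an $n$-chain group has abelianization $\mathbb{Z}^n$ --- and $F\subset[T,T]$ is a special consequence of $T$'s relations, not of the chain/ring formalism). The paper instead exploits the specific shape of the chain group produced by Theorem~\ref{embedding_thm}: it is $\langle N,a,b\rangle$ with $\supp(N)\subset[0,1/2]$ and $a$ the unit translation, so $[n_1,a]$ acts as $n_1$ on $[0,1/2]$ with its correction copy supported in a disjoint translate, and similarly for $[n_2,a^{-1}]$; hence $\langle[n_1,a],[n_2,a^{-1}]\rangle\subset[G_{\mathcal{F}},G_{\mathcal{F}}]$ is isomorphic to $N$. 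This localized commutator trick --- embedding $N$ itself, not $C_\alpha$, into the commutator subgroup --- is the missing idea.

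The second gap is finite generation of $[G_\alpha,G_\alpha]$, where you dismiss property A$\mathbb{R}$ as ``not needed'' and propose instead to prove $G_\alpha$ perfect, or $G_\alpha^{\mathrm{ab}}$ finite followed by Reidemeister--Schreier. This inverts the actual logic: property A$\mathbb{R}$ is exactly the tool the paper uses here, via Lemma~\ref{fg_lem} (\cite{CV}, Remark 2.6), which says that a group with property A$\mathbb{R}$ has finitely generated commutator subgroup --- this is the main reason the corollary is stated for the ring groups of Theorem~\ref{AR_thm} rather than for arbitrary ring groups satisfying Theorem~\ref{simple_thm}. Your alternative has no visible support: no computation of $G_\alpha^{\mathrm{ab}}$ is available, there is no reason it should be finite, and if it is infinite your Reidemeister--Schreier step collapses (and finite index alone would anyway be needed; compare $F$, where $[F,F]$ has infinite index and is not finitely generated). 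So while the bookkeeping is right, both load-bearing inputs to it are absent, and the repairs you propose do not match what can actually be proved about these groups.
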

We note that the existence of uncountably many finitely generated simple subgroups $\mathrm{Homeo}^+(S^1)$ is not a new result.
Indeed, it is known that there exist uncountably many finitely generated simple subgroups $\mathrm{Homeo}^+(\mathbb{R})$ (\cite{HL}). Theorem~\ref{AR_thm} provides an alternative proof to this fact.

The rest of this paper is organized as follows.
In Section~\ref{group_sec}, we recall precise definitions of $F$, $T$, $F_n$, $T_n$, 
chain groups and ring groups.
In Section~\ref{simple_sec}, we show Theorems~\ref{Higman1_lem} and \ref{simple_thm}.
In Section~\ref{fixed_sec}, we recall the criterion for property A$\mathbb{R}$ in \cite{CV}, and show Theorem~\ref{AR_thm}.
Finally, in Section~\ref{uncountable_sec}, 
we show Corollary~\ref{uncountable2_cor}.

\section{Thompson's groups, chain groups and ring groups}\label{group_sec}
In this section, we recall definitions and properties of Thompson's groups, chain groups and ring groups.
We first gather some notations and conventions.
For a group $G$, we denote the commutator subgroup by $[G,G]$. 
For $f$, $g\in G$, we write $[f,g]$ for the commutator $fgf^{-1}g^{-1}$.
We fix an orientation on $S^1$ and consider open intervals between two points of $S^1$.
For $a, b\in S^1$ and an open interval $J=(a,b)$ in $S^1$, we write $a=\partial_{-}J$ and $b=\partial_{+}J$.
Let $M$ be either $\mathbb{R}$ or $S^1$.
We write $\mathrm{Homeo}^+(M)$ for the group of orientation preserving homeomorphisms of $M$.
For $f\in \mathrm{Homeo}^+(M)$, we write $\supp(f)$ for the set-theoretic support of $f$:
$\supp(f)=\{t\in M\mid f(t)\neq t\}$.
For $S\subset \mathrm{Homeo}^{+}(M)$, we write $\supp(S)$ for the union of $\supp(s)$ for all $s\in S$.
We note that $\supp(f)$ and $\supp(S)$ are open subsets of $M$.

We start with definitions of {\it Thompson's groups} $F$ and $T$, following \cite{CFP}. 
$F$ is a group of piecewise affine, orientation preserving homeomorphisms of the unit interval that are differentiable except at finitely many dyadic rational numbers, and
with derivatives of powers of $2$ on intervals of differentiability.

We consider $S^1$ as the interval with the endpoints identified.
Then $T$ is a group of piecewise affine homeomorphisms of $S^1$ that preserve the set of dyadic rational numbers and that are differentiable except at finitely many dyadic rational numbers and with derivatives of powers of $2$ on intervals of differentiability.
$F$ is identified with the subgroup of $T$ consisting of elements that fix the basepoint of $S^1$.

Next, we recall definitions and properties of chain and ring groups.
Let $m, n\geq 2$ be a natural number.
An {\it $n$-chain of intervals} is a sequence of $n$ nonempty open intervals $(J_1,\ldots, J_n)$ in $\mathbb{R}$, satisfying that 
\begin{itemize}
\item[(C1)] $J_i\cap J_j=\emptyset$ if $|i-j| >1$,
\item[(C2)] $J_{i}\cap J_{i+1}$ is a proper nonempty subinterval of $J_{i}$ and $J_{i+1}$ for every $i\in \{1,\ldots, n-1\}$.
\end{itemize}
An {\it $n$-prechain group} $G_{\mathcal{F}}$ is a group 
generated by $\mathcal{F}=\{f_i\}_{1\leq i\leq n}\subset \mathrm{Homeo}^{+}(\mathbb{R})$ such that the support of every $f_i$ is a nonempty open interval and the sequence $(\supp(f_1),\ldots, \supp(f_n))$ is an $n$-chain of intervals.
An {\it $n$-chain group} is an $n$-prechain group such that $\langle f_i, f_{i+1}\rangle$ is isomorphic to Thompson's group $F$ for every $1\leq i\leq n-1$. 
A group is a {\it chain group} if it is an $n$-chain group for some $n$.
The next lemma shows that the definition of chain groups is somewhat natural.
\begin{lemma}[{\cite[Lemma 3.1]{KKL}}]\label{2chain_lem}
Let $G_{\mathcal{F}}$ be a $2$-prechain group with respect to $\mathcal{F}=\{f_1, f_2\}$. 
Suppose that 
\begin{align}\label{strong_condition}
f_2f_1(\partial_{-}\supp(f_2))\geq \partial_{+}\supp(f_1).
\end{align}
Then $f_1$ and $f_2$ satisfy
$[f_1, \left(f_2f_1\right)f_2\left(f_2f_1\right)^{-1}]=1$
and the group generated by $f_1$ and $f_2$ is isomorphic to Thompson's group $F$. 
\end{lemma}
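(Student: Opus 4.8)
The plan is to treat the two assertions separately: the commutation relation falls out of a single support computation, and I then bootstrap it to the isomorphism with $F$.

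First I fix coordinates. Write $\supp(f_1)=(a_1,b_1)$ and $\supp(f_2)=(a_2,b_2)$. Since the pair forms a $2$-chain, condition (C2) makes the overlap a proper subinterval of each support, so (writing the chain from left to right) $a_1<a_2<b_1<b_2$; in particular $\partial_-\supp(f_2)=a_2$ and $\partial_+\supp(f_1)=b_1$. Because each support is a single open interval, each $f_i$ is a one-bump homeomorphism, and a short case analysis shows that \eqref{strong_condition} can hold only when both $f_1$ and $f_2$ move every point of their support to the right. Consequently $w:=f_2f_1$ fixes $a_1$ and $b_2$ and satisfies $w(t)>t$ for all $t\in(a_1,b_2)$, so $w$ is itself a one-bump homeomorphism pushing points to the right on $(a_1,b_2)$.

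To prove the commutation relation I compute the support of the conjugate. Since conjugation transports supports,
\[
\supp\bigl((f_2f_1)f_2(f_2f_1)^{-1}\bigr)=w\bigl(\supp(f_2)\bigr)=\bigl(w(a_2),w(b_2)\bigr)=\bigl(w(a_2),b_2\bigr),
\]
using $w(b_2)=b_2$. The hypothesis \eqref{strong_condition} says precisely $w(a_2)=f_2f_1(a_2)\ge b_1$, so this interval is contained in $(b_1,\infty)$ and is therefore disjoint from $\supp(f_1)=(a_1,b_1)$. Homeomorphisms with disjoint supports commute, so $[f_1,(f_2f_1)f_2(f_2f_1)^{-1}]=1$, which is the first assertion.

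For the isomorphism $\langle f_1,f_2\rangle\cong F$ I would produce a surjection $\Phi\colon F\twoheadrightarrow\langle f_1,f_2\rangle$ from a presentation of $F$ and then argue that it is injective. Iterating the previous computation, the strong condition propagates: $w^n(\supp(f_2))=(w^n(a_2),b_2)\subseteq(b_1,\infty)$ for every $n\ge1$, so $[f_1,w^nf_2w^{-n}]=1$ for all $n\ge1$, and more generally the disjointness pattern among the iterated conjugates of $f_2$ by powers of $w$ is meant to reproduce the incidence relations among the standard generators $x_0,x_1,x_2,\dots$ of $F$, which satisfy $x_j^{-1}x_ix_j=x_{i+1}$ for $j<i$. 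Matching $f_1,f_2$ to an appropriate generating pair and checking that each Thompson relation reduces to one of these support-disjointness commutations yields $\Phi$. For injectivity I would invoke the structural fact that every non-trivial normal subgroup of $F$ contains $[F,F]$, so that every proper quotient of $F$ is abelian; since $f_1$ and $f_2$ have properly overlapping supports they do not commute, hence $\langle f_1,f_2\rangle$ is non-abelian and $\ker\Phi$ must be trivial.

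The main obstacle is the construction of $\Phi$. The supports of $f_1,f_2$ sit in overlapping chain position, whereas the supports of the standard generators of $F$ are nested at a common endpoint, so identifying the correct generating pair and verifying that the full family of Thompson relations follows from the iterated disjoint-support computation requires careful dynamical bookkeeping. Once this is done, the commutation relation of the second step is exactly the base case of that family, and the injectivity step is immediate from the quotient property of $F$.
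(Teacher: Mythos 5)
Your treatment of the first assertion is complete and correct: since each $f_i$ has no fixed point in its (interval) support, $f_i - \mathrm{id}$ has constant sign there; the hypothesis $f_2f_1(a_2)\ge b_1$ then forces both generators to move points rightward, $w=f_2f_1$ fixes $a_1$ and $b_2$, and the computation $\supp\left(wf_2w^{-1}\right)=\left(w(a_2),b_2\right)\subseteq (b_1,\infty)$ gives disjointness from $\supp(f_1)=(a_1,b_1)$, hence the commutation relation. Note that the paper itself offers no proof of this lemma --- it is quoted verbatim from \cite[Lemma 3.1]{KKL} --- so the only available comparison is with the source, and your first half reproduces the standard support computation used there.

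The second assertion, $\langle f_1,f_2\rangle\cong F$, is not actually proved: you describe a plan and, by your own admission, defer its ``main obstacle,'' namely the construction of the surjection $\Phi\colon F\twoheadrightarrow\langle f_1,f_2\rangle$. Worse, the plan as stated aims at the wrong presentation: the relations $x_j^{-1}x_ix_j=x_{i+1}$ ($j<i$) of the infinite presentation are conjugation identities, not commutations, so they cannot each ``reduce to a support-disjointness commutation''; what disjointness gives you directly is only the family $[f_1,w^nf_2w^{-n}]=1$ ($n\ge 1$). The standard bridge --- and the actual content of the cited KKL lemma --- is the two-relator presentation $F=\langle x_0,x_1\mid [x_0x_1^{-1},x_0^{-1}x_1x_0]=[x_0x_1^{-1},x_0^{-2}x_1x_0^2]=1\rangle$, whose relators \emph{are} commutators, together with an explicit change of generating pair in $\langle f_1,f_2\rangle$ under which these two relators become precisely the $n=1,2$ cases of your disjointness computation; without exhibiting that substitution and verifying it, no homomorphism from $F$ exists and the argument does not get off the ground. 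Your injectivity step is fine in outline (every proper quotient of $F$ is abelian), but the claim that properly overlapping supports force $[f_1,f_2]\neq 1$ needs its one-line justification: if $f_1$ and $f_2$ commuted, $f_1$ would preserve $\supp(f_2)$ and hence fix $\partial_-\supp(f_2)=a_2$, contradicting $a_2\in\supp(f_1)$.
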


In this paper, we study an ``$S^1$-version'' of chain groups.
Let $m\geq 3$ be an integer.
An {\it $m$-ring of intervals} is a sequence of $m$ nonempty open intervals  $(J_1,\ldots, J_m)$ in $S^1$, satisfying that 
\begin{itemize}
\item[(R1)] $J_i\cap J_j=\emptyset$ if $|i-j| >1$, and
\item[(R2)] $J_{i}\cap J_{i+1}$ is a proper nonempty subinterval of $J_{i}$ and $J_{i+1}$ for every $1\leq i\leq m-1$, where indices are considered modulo $m$.
\end{itemize}
An {\it $m$-prering group} $G_{\mathcal{R}}$ is a subgroup of $\mathrm{Homeo}^+(S^1)$
generated by $\mathcal{R}=\{r_i\}_{1\leq i\leq m}\subset \mathrm{Homeo}^+(S^1)$ such that the support of every $r_i$ is a nonempty open interval and
the sequence $(\supp(r_1),\ldots, \supp(r_m))$ is an $m$-ring of open intervals.
An {\it $m$-ring group} is an $m$-prering group such that 
that $\langle r_i, r_{i+1}\rangle$ is isomorphic to $F$ for every $i$.
A group is called a {\it ring group} if it is an $m$-prering group for some $m$.
For $2\leq n\leq m-1$, an {\it $n$-prechain subgroup} (resp.\ $n$-chain subgroup) of an $m$-prering group (resp.\ $m$-ring group) is a subgroup generated by $n$ elements of $\mathcal{R}$ 
with consecutive indices, where indices are considered modulo $m$.

\section{Main results}

\subsection{Simplicity of the commutator subgroups}\label{simple_sec}
In this section, we show Theorems~\ref{Higman1_lem} and \ref{simple_thm}.

First, we prove Theorem~\ref{Higman1_lem}.
\begin{proof}[Proof of Theorem~\ref{Higman1_lem}]
Let $N$ be a nontrivial normal subgroup of $G$.
Let $g\in N\setminus \{1\}$.
We show that $[s_1,s_2]\in N$ for every $s_1, s_2\in S$.

First, we suppose that there exists $u\in G$ such that $s_1, s_2, g, u$ satisfy $(\ref{Higman1_eq})$.
Let $g'=u g u^{-1}$. Since $g\in N$, which is a normal subgroup of $G$, $g'\in N$.
According to $(\ref{Higman1_eq})$,
$$\supp(s_1)\cap g'\supp(s_2)=\supp(s_1)\cap \supp(g' s_2 {g'}^{-1})=\emptyset.$$
It follows that $s_1$ and $g' s_2 {g'}^{-1}$ commute.
Therefore,
\begin{align*}
[s_1,s_2] &= s_1s_2s_1^{-1}s_2^{-1}= s_1 s_2 (g' s_2^{-1} {g'}^{-1} s_1^{-1} g' s_2 {g'}^{-1}) s_2^{-1} \\
&= \left(s_1 s_2 \left(g' (s_2^{-1} {g'}^{-1} s_2)\right) s_2^{-1} s_1^{-1}\right) g' (s_2 {g'}^{-1} s_2^{-1}).
\end{align*}
Since conjugates of $g'$ are in $N$, $[s_1,s_2]\in N$. 

Next, we suppose that there exist $u_1, u_2\in G$ such that $s_1, s_2, g, u_1, u_2$ satisfy $(\ref{Higman2_eq})$.
Let $g'=(u_2 g u_2^{-1})(u_1g u_1^{-1})$. 
By a similar argument, it follows that $[s_1,s_2]\in N$.

Since $[G,G]$ is normally generated by commutators of generators,
it follows that $N\supset [G,G]$.
\end{proof}

Next, we prepare some definitions and lemmas for Theorem~\ref{simple_thm}.
\begin{definition}[CO-transitive, locally CO-transitive, minimal]
Let $G$ be a group acting on a topological space $X$. 
We say the action is \textit{CO-transitive} (or \textit{compact-open transitive}) if for every proper compact subset $K$ of $X$ and every nonempty open subset $J$ of $X$, there exists an element $g$ in $G$ such that $g(K)\subset J$. 
We say the action is \textit{locally CO-transitive} if for every proper compact subset $K$ in $X$, there exists $x\in X$ such that for every nonempty open neighborhood $J$ of $x$, there exists an element $g$ in $G$ such that $g(K)\subset J$. 
We say the action is \textit{minimal} if every orbit is dense in $\supp(G)$.
\end{definition}

\begin{lemma}[{\cite[Lemma 3.6]{KKL}}]\label{minimal_chain_lem}
Let $G=G_{\mathcal{F}}$ be a prechain group. 
\begin{itemize}
\item[(1)] For each $g\in G$ and for each compact set $K$ in $\supp(G)$, there exists $u\in [G,G]$
such that the actions of $g$ and $u$ agree as functions on $K$.
In particular, $\supp(G)=\supp([G,G])$.
\item[(2)] The natural action of $[G,G]$ on $\supp(G)=\supp([G,G])$ is locally CO-transitive.
In fact, let $x$ be a boundary point of $\supp(f)$ of some $f\in \mathcal{F}$.
For every proper compact subset $K\subset \supp(G)$ and every open neighborhood $J$ of $x$, there exists an element $g$ in $[G,G]$
such that $g(K)\subset J$. 
\item[(3)] If the action of G on supp(G) is minimal, then the action of $[G,G]$ on $\supp(G)=\supp([G,G])$ is CO-transitive.
\end{itemize}
\end{lemma}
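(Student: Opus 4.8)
Throughout I use that, because consecutive members of an $n$-chain overlap in a proper subinterval, the union $\supp(G)=\bigcup_i\supp(f_i)$ is a single open interval, which I write as $(\alpha,\beta)$, with $\partial_-\supp(f_1)=\alpha$ and $\partial_+\supp(f_n)=\beta$. Every element of $G$ fixes $\alpha$, $\beta$ and everything outside $(\alpha,\beta)$, and the extreme generators $f_1$, $f_n$ govern the dynamics of $G$ near the two ends.

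For (1), fix $g\in G$ and a compact $K\subset(\alpha,\beta)$, and choose $a,b$ with $\alpha<a<b<\beta$ and $K\cup g(K)\subset(a,b)$. I would split the argument into two steps. The first step establishes the analogue of the classical identity that $[F,F]$ consists of the elements supported in the open interval: using $f_1$ (resp.\ $f_n$) one produces $t\in G$ that displaces $[a,b]$ monotonically towards $\alpha$ (resp.\ $\beta$), so that the iterates $t^k[a,b]$ are pairwise disjoint, and then realizes any element supported in $[a,b]$ as a commutator with $t$ via the standard displacement identity. The second step is a germ correction: using $f_1$ and $f_n$ I would build $w\in G$ supported in a neighbourhood of $\{\alpha,\beta\}$ that is disjoint from $[a,b]$ and that agrees with $g$ near $\alpha$ and near $\beta$. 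Then $u:=w^{-1}g$ agrees with $g$ on $K$ (since $w^{-1}$ is the identity on $g(K)$), has trivial germ at both endpoints, and is therefore supported in a compact subset of $(\alpha,\beta)$, hence lies in $[G,G]$ by the first step. The equality $\supp(G)=\supp([G,G])$ follows, since any point moved by some $g\in G$ lies in such a $K$ and is thus moved by the matching $u\in[G,G]$.

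For (2), I would take $x$ to be a boundary point of $\supp(f)$ for some $f\in\mathcal{F}$; as $x$ is an endpoint of one generator's support and an interior point of an adjacent generator's support, the corresponding copy of $F$ (cf.\ Lemma~\ref{2chain_lem}) contains elements fixing $x$ and contracting one-sided neighbourhoods of $x$ towards it. Given a proper compact $K\subset(\alpha,\beta)$ and an open neighbourhood $J$ of $x$, the plan is to push $K$ along the chain towards $x$ and then contract it into $J$, replacing every mover by an element of $[G,G]$ that agrees with it on the relevant compact set by part (1); the composite then lies in $[G,G]$ and carries $K$ into $J$. For (3), assume minimality; by (2) some $g_1\in[G,G]$ sends $K$ into an arbitrarily small neighbourhood $U'$ of $x$, and by density of the orbit of $x$ there is $h\in G$ with $h(U')\subset J$. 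Replacing $h$ by $g_2\in[G,G]$ agreeing with $h$ on $\overline{U'}$ via part (1), the element $g_2g_1\in[G,G]$ satisfies $g_2g_1(K)\subset J$, giving CO-transitivity.

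The main obstacle is part (1), and within it the requirement to stay inside the finitely generated group $G$ rather than in all of $\Homeo^+(\mathbb{R})$: both the displacing element $t$ used to express compactly supported maps as commutators and the germ-correcting element $w$ must be exhibited as honest elements of $G$, which is where the chain structure and the $F$-relations from Lemma~\ref{2chain_lem} must be used carefully. Once (1) is in place, parts (2) and (3) follow from it together with the one-sided contraction dynamics available near a boundary point of a generator's support.
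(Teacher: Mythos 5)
The paper itself gives no proof of this lemma: it is quoted with a citation to [KKL, Lemma~3.6], so your attempt has to be measured against the argument in that reference, which runs by exponent-sum (abelianization) correction rather than by your two-step decomposition. Your outlines of (2) and (3) are essentially sound and follow the standard route: since $\supp(f_i)$ is exactly an open interval, each generator moves every point of its support in one fixed direction, so (after replacing generators by inverses) a suitable product $t$ of all generators has no fixed point in $\supp(G)=(\alpha,\beta)$ and its iterates push compact sets along the chain; one compresses $K$ toward an interior boundary point $x$ of some $\supp(f)$ with such elements of $G$, then uses (1) to replace each mover by an element of $[G,G]$ agreeing with it on the relevant compact set, and minimality upgrades this to CO-transitivity exactly as you say.

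The genuine gap is the first step of your proof of (1). The claimed ``analogue of the classical identity,'' namely that every element of $G$ supported in a compact subinterval $[a,b]\subset(\alpha,\beta)$ lies in $[G,G]$, is false for prechain (even chain) groups in general. The Higman--Thompson group $F_n$, $n\ge 3$, is an $n$-chain group (as recalled in the introduction of this paper), and its abelianization is $\mathbb{Z}^n$, so the $n$ chain generators map to a basis of $G^{\mathrm{ab}}$; in particular a middle generator $f_i$ ($2\le i\le n-1$) is compactly supported in $\supp(G)$ yet survives in the abelianization, hence $f_i\notin[G,G]$. Correspondingly, the displacement identity you invoke produces $g=[h,t]$ with $h=\prod_{k\ge 0}t^kgt^{-k}$, an infinite product lying in $\mathrm{Homeo}^{+}(\mathbb{R})$ but not in the finitely generated group $G$; the obstacle you flag at the end is therefore not a technicality but a reflection of the fact that the step is false. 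Your germ correction $w$ only normalizes the exponents of $f_1$ and $f_n$, so $u=w^{-1}g$ has trivial germs at $\alpha$ and $\beta$ but in general nonzero exponent sums at the middle generators, and is not in $[G,G]$. The repair, which is how [KKL] argue, is to correct all exponent sums at once: fix a word for $g$ with exponent sums $e_1,\dots,e_n$ and set $u=g\prod_i c_i f_i^{-e_i}c_i^{-1}$, where each $c_i\in G$ is a power of $t$ (or $t^{-1}$) chosen so that $c_i(\supp(f_i))$ is disjoint from $K$, pushed toward an end of $(\alpha,\beta)$ --- this is possible for the end generators because $c_1(\supp(f_1))=(\alpha,c_1(\partial_{+}\supp(f_1)))$ shrinks toward $\alpha$, and for the middle generators because their supports are compactly contained in $(\alpha,\beta)$. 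Then $u\in[G,G]$ by construction and $u$ agrees with $g$ on $K$ since every correcting factor is the identity on $K$; no characterization of $[G,G]$ by supports is needed (only the easy inclusion that elements of $[G,G]$ have trivial germs at the ends, which you may keep).
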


\begin{lemma}\label{for-minimal_ring_prop}
Let $m\geq 3$.
Let 
\begin{itemize}
\item $G=G_{\mathcal{R}}$ be an $m$-prering group, 
\item $G_{\mathcal{R}'}$ a prechain subgroup of $G_{\mathcal{R}}$, 
\item $H$ an $(m-1)$-prechain subgroup of $G_{\mathcal{R}}$, 
\item $K$ a compact subset of $\supp(H)$, and 
\item $J$ an open subset of $S^1$ such that the closure $cl(J)$ is contained in $\supp(H)$.
\end{itemize}
Then there exists $u\in [G,G]$ such that 
$$u(K\cup cl(J))\subset \supp(G_{\mathcal{R'}}).$$
\end{lemma}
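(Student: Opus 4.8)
The plan is to reduce the statement to the prechain group $H$ and to apply Lemma~\ref{minimal_chain_lem}(2). First I would record the geometry of $\supp(H)$. Since $H$ is an $(m-1)$-prechain subgroup, it is generated by $m-1$ elements of $\mathcal{R}$ with consecutive indices; thus there is a single index $j$ with $H=\langle r_i : i\neq j\rangle$. By the ring axioms (R1) and (R2), the intervals $\{\supp(r_i)\}_{i\neq j}$ are consecutive and overlap pairwise, so $\supp(H)$ is a single open interval, and its closure misses the part of $\supp(r_j)$ not shared with its two neighbors; hence $\supp(H)$ is a \emph{proper} open subinterval of $S^1$. Consequently $C:=K\cup cl(J)$ is compact, contained in $\supp(H)$, and bounded away from $\partial\supp(H)$, so it is a proper compact subset of $\supp(H)$.

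Next I would locate a target point inside $\supp(G_{\mathcal{R}'})$. The support $\supp(G_{\mathcal{R}'})$ is an open arc of the form $(\partial_-\supp(r_p),\partial_+\supp(r_q))$, where $r_p$ and $r_q$ are the two extreme generators of $G_{\mathcal{R}'}$. Because $G_{\mathcal{R}'}$ has at least two generators while $H$ omits only the single index $j$, some generator $r_i$ of $G_{\mathcal{R}'}$ satisfies $i\neq j$, so that $r_i$ is a common generator of $G_{\mathcal{R}'}$ and $H$. For such an $i$ the interval $\supp(r_i)$ has an endpoint $x$ lying in the interior of $\supp(G_{\mathcal{R}'})$: from the overlap condition (R2), $\partial_+\supp(r_i)$ lies in the support of the next generator of $G_{\mathcal{R}'}$ unless $r_i=r_q$, and $\partial_-\supp(r_i)$ lies in the support of the previous one unless $r_i=r_p$, and at least one of these cases applies. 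Fix such an $x$ together with an open neighborhood $W$ of $x$ with $W\subset\supp(G_{\mathcal{R}'})$.

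Finally I would invoke Lemma~\ref{minimal_chain_lem}(2) for the prechain group $H$: since $x$ is a boundary point of $\supp(r_i)$ for the generator $r_i$ of $H$, and $C$ is a proper compact subset of $\supp(H)$, there is $g\in[H,H]$ with $g(C)\subset W$. Setting $u:=g$ and using $[H,H]\subseteq[G,G]$ yields $u(K\cup cl(J))=g(C)\subset W\subset\supp(G_{\mathcal{R}'})$, which is the desired conclusion.

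The step I expect to be the main obstacle is the bookkeeping in the middle paragraph: one must verify that $G_{\mathcal{R}'}$ and $H$ necessarily share a generator and that some shared generator has an endpoint strictly inside $\supp(G_{\mathcal{R}'})$. This rests on $G_{\mathcal{R}'}$ having at least two generators together with the precise overlap pattern of the ring; once the correct boundary point $x$ is pinned down, the transport of $C$ into $\supp(G_{\mathcal{R}'})$ is immediate from Lemma~\ref{minimal_chain_lem}(2).
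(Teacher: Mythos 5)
Your proposal is correct and follows essentially the same route as the paper: both locate a boundary point $x$ of the support of a generator shared by $H$ and $G_{\mathcal{R}'}$ that lies inside $\supp(G_{\mathcal{R}'})$, fix a small neighborhood of $x$ contained in $\supp(G_{\mathcal{R}'})$, and apply Lemma~\ref{minimal_chain_lem}(2) to the prechain group $H$ to obtain $u\in[H,H]\subset[G,G]$ carrying $K\cup cl(J)$ into that neighborhood. The bookkeeping you flag as the main obstacle (that $G_{\mathcal{R}'}$ and $H$ share a generator with an endpoint interior to $\supp(G_{\mathcal{R}'})$) is exactly what the paper asserts in its opening sentence without proof, so your fuller verification is a faithful expansion rather than a different argument.
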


\begin{proof}
Since $H$ is an $(m-1)$-prechain subgroup, there exists $r\in \mathcal{R}$ and a boundary point $x$ of $\supp(r)$ such that $x\in \supp(G_{\mathcal{R'}})\cap \supp(H)$.
We fix an open neighborhood $J_x$ of $x$ in $\supp(G_{\mathcal{R'}})\cap \supp(H)$.
According to Lemma~\ref{minimal_chain_lem} (2), there exists $u\in [H,H]$ such that 
 $$u(K\cup cl(J))\subset J_x\subset \supp(G_{\mathcal{R'}}).$$
\end{proof}

The following proposition can be proved in the same way as \cite[Lemma 3.2.1]{HLR}.
For convenience, we include a proof.
\begin{proposition}[{\cite[Lemma 3.2.1]{HLR}}]\label{minimal_ring_prop}
Let $m\geq 3$.
Let $G=G_{\mathcal{R}}$ be an $m$-prering group with respect to $\mathcal{R}=\{r_i\}_{1\leq i\leq m}$. 
If $G$ has a prechain subgroup whose action on its support is minimal, then the action of $[G,G]$ on $S^1$ is CO-transitive. 
\end{proposition}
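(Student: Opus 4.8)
The plan is to verify the definition of CO-transitivity directly: given a proper compact subset $K\subset S^1$ and a nonempty open $J\subset S^1$, I must produce $g\in[G,G]$ with $g(K)\subset J$. Throughout I write $G_{\mathcal{R}'}$ for the prechain subgroup whose action on $\supp(G_{\mathcal{R}'})$ is minimal, so that by Lemma~\ref{minimal_chain_lem}~(3) the group $[G_{\mathcal{R}'},G_{\mathcal{R}'}]$ acts CO-transitively on $\supp(G_{\mathcal{R}'})$. Since it suffices to land $K$ inside any nonempty open subset of $J$, I would first shrink $J$ to a small open arc $J_0\subset J$ whose closure $cl(J_0)$ is a proper closed arc.

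The engine of the proof is Lemma~\ref{for-minimal_ring_prop}. Once $K$ and $cl(J_0)$ both lie in the support of a single $(m-1)$-prechain subgroup $H$, that lemma furnishes $u\in[G,G]$ with $u(K\cup cl(J_0))\subset\supp(G_{\mathcal{R}'})$. Then $u(K)$ is a proper compact subset of the open arc $\supp(G_{\mathcal{R}'})$ and $u(J_0)$ is a nonempty open subset of it, so CO-transitivity of $[G_{\mathcal{R}'},G_{\mathcal{R}'}]$ yields $v\in[G_{\mathcal{R}'},G_{\mathcal{R}'}]$ with $v(u(K))\subset u(J_0)$. Consequently $u^{-1}vu(K)\subset J_0\subset J$, and $u^{-1}vu\in[G,G]$ by normality of the commutator subgroup, which finishes the proof. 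Thus everything reduces to the preliminary step of moving $K$ into a single $(m-1)$-prechain support that also contains $cl(J_0)$.

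For this preliminary step I would exploit the ring structure. Since the ring of supports wraps around the circle, $\bigcup_i\supp(r_i)=S^1$, and deleting one generator $r_k$ gives an $(m-1)$-prechain subgroup $H_k$ with $\supp(H_k)=S^1\setminus A_k$, where $A_k\subset\supp(r_k)$ is the closed ``private'' arc of $r_k$ not covered by its two neighbours; because supports only overlap consecutively, the arcs $A_1,\dots,A_m$ are pairwise disjoint. Hence $g_1(K)\subset\supp(H_k)$ is equivalent to $g_1(K)$ missing $A_k$. Using that $K$ is proper, fix an open arc $U$ disjoint from $K$. For an index $k$ belonging to $G_{\mathcal{R}'}$ the arc $A_k\subset\supp(r_k)\subset\supp(G_{\mathcal{R}'})$ is a proper compact subset of $\supp(G_{\mathcal{R}'})$, so CO-transitivity of $[G_{\mathcal{R}'},G_{\mathcal{R}'}]$ lets me slide $A_k$ into $U$, i.e.\ find $h\in[G,G]$ with $h(A_k)\subset U$; then $g_1=h^{-1}$ satisfies $g_1(K)\cap A_k=\emptyset$, so $g_1(K)\subset\supp(H_k)$.

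The hard part will be the bookkeeping that makes a single index $k$ serve both $K$ and $J$ simultaneously: I must choose $k$ so that $A_k$ can be cleared of $K$ while the shrunk target $cl(J_0)$ still lies in $\supp(H_k)=S^1\setminus A_k$. Here the hypothesis $m\geq 3$ is essential, since it supplies several disjoint private arcs, hence several candidate subgroups $H_k$; a small enough $J_0$ meets at most one private arc, so at least one admissible $k$ remains. I expect the genuine obstacle to be ensuring that the gap $U$ can be brought into contact with $\supp(G_{\mathcal{R}'})$, so that the sliding of $A_k$ into $U$ is legitimate when $U$ happens to sit in the complement of $\supp(G_{\mathcal{R}'})$; I anticipate resolving this by an initial application of the local CO-transitivity in Lemma~\ref{minimal_chain_lem}~(2) to reposition $K$ before clearing the private arc, which is exactly where minimality of $G_{\mathcal{R}'}$ upgrades the local contractions to the freedom required.
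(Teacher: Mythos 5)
Your overall architecture coincides with the paper's: reduce to placing $K$ and $cl(J_0)$ simultaneously in the support of a single $(m-1)$-prechain subgroup, then finish with Lemma~\ref{for-minimal_ring_prop} followed by the CO-transitivity of $[G_{\mathcal{R}'},G_{\mathcal{R}'}]$ from Lemma~\ref{minimal_chain_lem}~(3); and your ``clear the private arc'' maneuver ($h(A_k)\subset U$, then invert) is exactly the trick the paper uses in its second case. However, the step you yourself flag as the hard part is a genuine gap, and the fix you sketch does not work. The problematic configuration really occurs: take $m=5$, $\mathcal{R}'=\{r_1,r_2\}$, and $K=S^1\setminus U$ with $U$ a small open arc inside $\supp(r_3)\cap\supp(r_4)$. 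Then $K$ contains every private arc $A_k$, and the only available gap $U$ is disjoint from $\supp(G_{\mathcal{R}'})$, since $\supp(r_3)\cap\supp(r_4)$ misses $\supp(r_1)\cup\supp(r_2)$ by the ring conditions. No element of $[G_{\mathcal{R}'},G_{\mathcal{R}'}]$ can push $A_k$ into $U$, because every such element preserves $\supp(G_{\mathcal{R}'})$ setwise. Your fallback --- applying the local CO-transitivity of Lemma~\ref{minimal_chain_lem}~(2) ``to reposition $K$'' --- is circular: that lemma concerns a prechain group acting on its own support, so it applies to $K$ only if $K$ already lies in $\supp(H)$ for some prechain subgroup $H$, i.e.\ only if $K$ already misses some private arc, which in the configuration above fails for every choice of $H_k$ and is precisely what you are trying to arrange.

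The missing idea, which is how the paper's case (2) proceeds, is to move the \emph{gap} rather than $K$ or $A_k$: choose $U$ with $cl(U)\subset S^1\setminus K$ small enough that $cl(U)\subset\supp(r_j)$ for a single index $j$; then $cl(U)$ lies in the support of an $(m-1)$-prechain subgroup, so Lemma~\ref{for-minimal_ring_prop} --- applied to the open set $U$, not to $K$ --- produces $v_1\in[G,G]$ with $v_1(cl(U))\subset\supp(G_{\mathcal{R}'})$ (in the paper this is the step $v_1(S^1\setminus K)\subset\supp(G_{\mathcal{R}'})$). Only now is the sliding legitimate: fix $r_i\in\mathcal{R}'$ and a compact interval $I\subset\supp(r_i)$ containing the private arc $A_i$ and such that $S^1\setminus I\subset\supp(H_i)$ (the paper's $\varepsilon$-shrunk interval); Lemma~\ref{minimal_chain_lem}~(3) gives $v_2\in[G_{\mathcal{R}'},G_{\mathcal{R}'}]$ with $v_2^{-1}(I)\subset v_1(U)$, whence $v_2v_1(K)\subset S^1\setminus I\subset\supp(H_i)$, and the rest of your argument runs verbatim with $g_1=v_2v_1$. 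So your proof is repairable, but the one step you left open must be closed by applying Lemma~\ref{for-minimal_ring_prop} to the complementary gap, not by invoking Lemma~\ref{minimal_chain_lem}~(2) on $K$.
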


\begin{proof}
We write $G_{\mathcal{R'}}$ for the prechain subgroup whose action on its support is minimal.
Here, $\mathcal{R'}\subset \mathcal{R}$ is the set of generators as a chain subgroup.
We fix a proper compact subset $K$ and a nonempty open subset $J$ of $S^1$.
Without loss of generality, we may assume the following:
\begin{itemize} 
\item $K$ is a closed interval in $S^1$ such that $\partial_{-}K\in \supp(r_1)$.
\item $J$ is an open interval, and there exists $1\leq j\leq m$ such that the closure $cl(J)$ of $J$ is contained in $\supp(r_j)$.
In fact, for
\begin{align}\label{epsilon_eq}
\varepsilon:=\frac{1}{2}\min_{1\leq i\leq m}|\partial_{-}\supp(r_{i+1})-\partial_{+}\supp(r_i)|
\end{align}
and 
$$J_i:=\left(\partial_{-}\supp(r_i)+\varepsilon, \partial_{+}\supp(r_i)-\varepsilon\right) \quad(1\leq i\leq m),$$
$\{J_i\}_{1\leq i\leq m}$ covers $S^1$ and there exists $j$ such that $J_j\cap J\neq \emptyset$. 
We replace $J$ with $J_j\cap J$.
We note that the closure of such $J$ is in $\supp(r_j)$.
\end{itemize}
First, we show that there exists $u_0\in [G,G]$ such that $u_0(cl(J))$ is included in $\supp(r_j)\cap \supp(r_{j+1})$, where $cl(J)$ denotes the closure of $J$.
We take sufficiently large $p\in \mathbb{N}$ so that $r_{j}^p(cl(J))$ is included in $\supp(r_j)\cap \supp(r_{j+1})$.
Applying Lemma~\ref{minimal_chain_lem} (1) to the prechain subgroup generated by $r_j$ and $r_{j+1}$,
we take $u_0\in [G,G]$ such that $u_0$ and $r_j^p$ agree as functions on the closure of $J$. 
Then $u_0(cl(J))$ is included in $\supp(r_j)\cap \supp(r_{j+1})$.

Next, we consider two cases: (1) the case where 
$$K\subset \left(\partial_{-}\supp(r_{1}), \partial_{+}\supp(r_{m-1})\right)$$
and (2) otherwise. 

In the case of (1), $K$ is contained in the support of an $(m-1)$-prechain subgroup
$$H_1:=\langle r_1,\ldots, r_{m-1}\rangle.$$
Since $H_1$ is an $(m-1)$-prechain subgroup, the closure of $u_0(J)$ is contained in $\supp(H_1)$.
By Lemma~\ref{for-minimal_ring_prop}, there exists $u_1\in [G,G]$ such that 
 $$u_1(K\cup u_0(J))\subset \supp(G_{\mathcal{R'}}).$$
 According to Lemma~\ref{minimal_chain_lem} (3), there exists $u_2\in [G_{\mathcal{R'}},G_{\mathcal{R'}}]$ such that 
$$u_2(u_1(K))\subset u_1u_0(J).$$
Then $u_0^{-1}u_1^{-1}u_2u_1\in [G,G]$ maps $K$ into $J$.

In the case of (2),
the closure of $S^1\setminus K$ is contained in the support of a prechain subgroup
$$H_m:=\langle r_m, r_1,\ldots, \supp(r_{m-2})\rangle.$$
By Lemma~\ref{for-minimal_ring_prop}, there exists $v_1\in [G,G]$ such that
$$v_1(S^1\setminus K)\subset \supp(G_{\mathcal{R'}}).$$
Fix $r_i\in \mathcal{R'}$ and let
$$I:=\left[\partial_{-}\supp(r_i)+\varepsilon, \partial_{+}\supp(r_i)-\varepsilon\right]\subset \supp(G_{\mathcal{R'}})$$
where $\varepsilon$ is the one in $(\ref{epsilon_eq})$. 
We apply Lemma~\ref{minimal_chain_lem} (3) to $G_{\mathcal{R'}}$ and get $v_2\in [G_{\mathcal{R'}},G_{\mathcal{R'}}]$ such that 
$$v_2^{-1}(I)\subset v_1(S^1\setminus K).$$
It follows that 
$$v_2v_1(K)\subset S^1\setminus I,$$
and thus $v_2v_1(K)$ is contained in the support of the $(m-1)$-chain subgroup $H_m$.
Since $H_m$ is an $(m-1)$-prechain subgroup, $cl(u_0(J))$ is contained in $\supp(H_1)$.
By Lemma~\ref{for-minimal_ring_prop}, there exists $v_3\in [G,G]$ such that 
$$v_3(v_2v_1(K)\cup u_0(J)) \subset \supp(G_{\mathcal{R'}}).$$
We apply Lemma~\ref{minimal_chain_lem} (3) to $G_{\mathcal{R'}}$ and get $v_4\in [G_{\mathcal{R'}},G_{\mathcal{R'}}]$ such that 
$$v_4(v_3v_2v_1(K))\subset v_3u_0(J).$$
Then $u_0^{-1}v_3^{-1}v_4v_3v_2v_1\in [G,G]$ maps $K$ into $J$.
\end{proof}

\begin{proof}[Proof of Theorem~\ref{simple_thm}]
Let $m\geq 3$.
Let $G=G_{\mathcal{R}}$ be an $m$-ring group with respect to $\mathcal{R}=\{r_i\}_{1\leq i\leq m}$
with a prechain subgroup whose action on its support is minimal.

First, we apply Theorem~\ref{Higman1_lem} to $G$ and $S=\mathcal{R}=\{r_i\}_{1\leq i\leq m}$.
Let $s_1, s_2\in S\setminus \{1\}$ and $g\in G\setminus \{1\}$.
We show that there exists $u\in G$ such that $s_1,s_2,g,u$ satisfy $(\ref{Higman1_eq})$ in Theorem~\ref{Higman1_lem}.
Since $g\neq 1$, there exists an open interval $J$ such that $J\cap gJ=\emptyset$. 
According to Proposition~\ref{minimal_ring_prop}, there exists $u\in G$ such that 
$$u^{-1}(\supp(s_1)\cup \supp(s_2))\subset J.$$
Since $J\cap gJ=\emptyset$, we have
$$u^{-1}(\supp(s_1)\cup \supp(s_2))\cap gu^{-1}(\supp(s_1)\cup \supp(s_2))=\emptyset,$$
and thus 
$$(\supp(s_1)\cup \supp(s_2))\cap ugu^{-1}(\supp(s_1)\cup \supp(s_2))=\emptyset,$$
which is $(\ref{Higman1_eq})$.
According to Theorem~\ref{Higman1_lem}, 
every normal subgroup $N\neq 1$ contains $[G,G]$.
In particular, $[G,G]$ equals to the second commutator subgroup of $G$.

Next, we apply Theorem~\ref{Higman1_lem} to $[G,G]$ and 
$$S'=\{g[r_i,r_{i+1}]g^{-1}\quad(1\leq i\leq m-1) \mid r_i\in \mathcal{R},\ g\in G\}.$$
We note that $[G,G]$ is normally generated by commutators of generators.
Since $\supp([r_i,r_{i+1}])\neq S^1$ for every $1\leq i\leq n-1$,
$\supp(s')\neq S^1$ for every $s'\in S'$.
Let $s'_1, s'_2\in S'\setminus \{1\}$ and $g'\in G\setminus \{1\}$.
We consider two cases: (1) $\supp(s'_1)\cup\supp(s'_2)\neq S^1$, and (2) $\supp(s'_1)\cup\supp(s'_2)= S^1$.
We first treat the case (1). 
When $\supp(s'_1)\cup\supp(s'_2)\neq S^1$, 
we show that there exists $u\in G$ such that $s'_1,s'_2,g',u$ satisfy $(\ref{Higman1_eq})$ in Theorem~\ref{Higman1_lem} by a similar argument as before.
We next treat the case (2). When $\supp(s'_1)\cup\supp(s'_2)= S^1$,  
we show that there exist $u_1, u_2\in G$ such that $s'_1,s'_2,g',u_1,u_2$ satisfy $(\ref{Higman2_eq})$ in Theorem~\ref{Higman1_lem}.
Since $g'\neq 1$, there exists an open interval $J$ such that $J\cap g'J=\emptyset$. 
According to Proposition~\ref{minimal_ring_prop}, the action of $[G,G]$ on $S^1$ is CO-transitive.
Therefore, there exists $u_1\in [G,G]$ such that 
$u_1^{-1}(\supp(s'_2))\subset J.$
Then
\begin{align*}
g'u_1^{-1}(\supp(s'_2))&\subset g'J\\
&\subset S^1\setminus J\\
&\subset S^1\setminus u_1^{-1}(\supp(s'_2))\\
&\subset u_1^{-1}(\supp(s'_1)),
\end{align*}
since $\supp(s'_1)\cup\supp(s'_2)= u_1^{-1}(\supp(s'_1)\cup\supp(s'_2))=S^1$.
It follows that 
\begin{align}\label{1.2eq}
u_1g'u_1^{-1}(\supp(s'_2))\subset \supp(s'_1).
\end{align}
Let 
$$s''_2=u_1g'u_1^{-1}(s'_2)u_1g'^{-1}u_1^{-1}.$$ 
That is, $s''_2$ is the conjugate of $s'_2$ by $u_1g'u_1^{-1}$.
Therefore, 
\begin{align}\label{1.2eq-2}
u_1g'u_1^{-1}\supp(s'_2)=\supp(s''_2).
\end{align}
By $(\ref{1.2eq})$,
$$\supp(s'_1)\cup \supp(s''_2)\subset \supp(s'_1)\neq S^1.$$
Replacing $s'_2$ with $s''_2$, the case (2) results in (1), and there exists $u\in G$ such that $s'_1,s''_2,g',u$ satisfy $(\ref{Higman1_eq})$ in Theorem~\ref{Higman1_lem}:
$$(\supp(s'_1)\cup \supp(s''_2))\cap ug'u^{-1}(\supp(s'_1)\cup \supp(s''_2))=\emptyset.$$
By $(\ref{1.2eq-2})$,
$$(\supp(s'_1)\cup u_1g'u_1^{-1}\supp(s'_2))\cap ug'u^{-1}(\supp(s'_1)\cup u_1g'u_1^{-1}\supp(s'_2)))=\emptyset,$$
which is $(\ref{Higman2_eq})$.
Therefore, in each of cases (1) and (2), we have confirmed a requirement for Theorem~\ref{Higman1_lem}.
By Theorem~\ref{Higman1_lem}, for every normal subgroup $N$ of $[G,G]$, 
$N$ contains the second commutator subgroup of $G$, which is equal to $[G,G]$.
It follows that $N=[G,G]$. In other words, $[G,G]$ is simple. 
\end{proof}

\subsection{Property A$\mathbb{R}$ of ring groups}\label{fixed_sec}
In this section, we show Theorem~\ref{AR_thm}.

An {\it $\mathbb{R}$-tree} is a non-empty metric space in which any two points are joined by a
unique arc which is isometric to a closed interval in the real line.
We say that a group $G$ has {\it property A$\mathbb{R}$} if every isometric action of $G$ on an $\mathbb{R}$-tree without global fixed points has an invariant line upon which $G$ acts by translation.

To show property A$\mathbb{R}$, we use a criterion for property A$\mathbb{R}$ in \cite{CV}.
To describe the criterion, we start with citing some terms from \cite{CV}.
Let $G$ be a finitely generated group and let $S$ be a finite generating set of $G$.
For $s_i\neq s_j\in S$, a {\it minipotent word} in $\{s_i, s_j\}$ is a word of the form 
\begin{align*}
s_i^{\varepsilon_1}s_j^{\varepsilon_2}\cdots s_i^{\varepsilon_{2l-1}}s_j^{\varepsilon_{2l}} \ \text{or}\ s_j^{\varepsilon_1}s_i^{\varepsilon_2}\cdots s_j^{\varepsilon_{2l-1}}s_i^{\varepsilon_{2l}},
\end{align*} 
where $\varepsilon_i\in \{\pm1\}$ for $1\leq i\leq 2l$.
Let $\Delta(G,S)$ be the graph with the vertex set $S$ and the edge set 
\begin{align*}
E=\left\{\{s_i,s_j\} \middle| 
\begin{array}{l}
\text{ $\exists$ a minipotent word $w_{i,j}$ in $\{s_i, s_j\}$}\\
\text{ which commutes with either $s_i$ or $s_j$.}
 \end{array}
 \right\}.
\end{align*}
For $s_i\neq s_j\in S$, we say $(s_i,s_j)\in S\times S$ is {\it distinguished} if $\exists k\in \mathbb{N}$ such that $[s_i, s_j^{(k)}]$ commutes with $s_j$,
where $[s_i, s_j^{(k)}]$ ($k\in \mathbb{N}$) is defined inductively by
\begin{align*}
&[s_i, s_j^{(1)}]:=[s_i,s_j],\quad(k=1),\\
&[s_i, s_j^{(k)}]:=[s_i, s_j^{(k-1)}]\quad(k\geq 2).
\end{align*}
Let $\Delta'(G,S)$ be a subgraph of $\Delta(G,S)$ with vertex set $S$ and the edge set 
\begin{align*}
E'=\{\{s_i,s_j\}\in E\mid \text{$(s_i,s_j)$ or $(s_j,s_i)$ is distinguished}\}.
\end{align*}
We note that if $s_i\neq s_j\in S$ commute, then both $(s_i,s_j)$ and $(s_j,s_i)$ are distinguished.
Therefore, if a subset $V$ of $S$ consists of mutually commuting elements, then the subgraph of $\Delta'(G,S)$ spanned by $V$ is complete (in particular, connected).
A {\it conjugacy class} in $S$ is a non-empty subset of $S$ given as the intersection of $S$ and a conjugacy class in $G$.
A subset $V$ of $S$ is {\it $\Delta'$-connected} if the subgraph of $\Delta'(G,S)$ spanned by $V$ is connected.
A subset $V$ of $S$ is {\it dense} if for every $x\in S\setminus V$, there exist $v, v'\in V$ such that $(v,x)$ and $(x,v')$ are distinguished.
\begin{theorem}[{\cite[Theorem 2.3]{CV}}]\label{CV_thm}
Let $G$ be a group and let $S$ be a generating set of $G$.
Suppose that $\Delta(G,S)$ is the complete graph on $S$.
Suppose further that each conjugacy class in $S$ contains a $\Delta'$-connected dense subset.
Then $G$ has property A$\mathbb{R}$.
\end{theorem}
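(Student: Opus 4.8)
The plan is to reconstruct the Culler--Vogtmann argument, which converts the combinatorial hypotheses on $(G,S)$ into geometric constraints on an arbitrary isometric action and then assembles a single invariant line. So I would fix an isometric action of $G$ on an $\mathbb{R}$-tree $T$ with no global fixed point and aim to produce an invariant line on which $G$ acts by translations. The basic dictionary is the classification of single isometries: each $g\in G$ is either \emph{elliptic}, with nonempty fixed subtree $\mathrm{Fix}(g)$, or \emph{hyperbolic}, with a unique invariant \emph{axis} $A_g$ on which it acts by a translation of length $\ell(g)>0$. I would record the standard facts I expect to need: (a) anything commuting with a hyperbolic $g$ preserves $A_g$ together with its translation direction, so any isometry reversing the orientation of $A_g$ cannot commute with $g$; (b) the translation-length formulas expressing $\ell(gh)$ through the relative position of the characteristic sets of $g$ and $h$ (in particular two elliptics with disjoint fixed sets have a hyperbolic product, and two hyperbolics with disjoint or boundedly-overlapping axes have a hyperbolic commutator or product); and (c) the Helly property for finitely many pairwise-intersecting subtrees of an $\mathbb{R}$-tree.

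The first real step is to show that completeness of $\Delta(G,S)$ forces every pair of generators to be \emph{coherently compatible} in every action. A minipotent word $w_{i,j}$ is a balanced alternating word in $s_i,s_j$; by the length formulas in (b), if $A_{s_i}$ and $A_{s_j}$ were disjoint or transverse then $w_{i,j}$ would be hyperbolic with an axis distinct from both $A_{s_i}$ and $A_{s_j}$, and by (a) it could then commute with neither generator. Likewise, in an orientation-reversing (dihedral) configuration on a common line, $w_{i,j}$ is a translation while a generator reverses the orientation of that line, so by (a) they again fail to commute. Since the relation ``$w_{i,j}$ commutes with $s_i$ or $s_j$'' is a group identity, it survives in every action, contradicting these geometric conclusions. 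Hence each edge of $\Delta(G,S)$ certifies that the free-group-producing and reflecting configurations do not occur for that pair, so $s_i$ and $s_j$ share a characteristic set and act coherently; completeness spreads this across all of $S$.

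The second step uses the finer data in $\Delta'(G,S)$ and the \emph{distinguished} relation to pin down \emph{which} compatible configuration occurs and to make ``shares a coherent axis with'' transitive. If every generator were elliptic, then the pairwise intersections of the fixed subtrees together with the Helly property (c) would give a global fixed point, contradicting our standing assumption; so some generator is hyperbolic, with axis $L$. I would then propagate coherent axis-sharing along a $\Delta'$-connected subset of each conjugacy class and, via the denseness hypothesis together with the conjugation-invariance of $\ell$, across the remaining elements of $S$, concluding that all of $S$ preserves $L$ coherently. Since $S$ generates $G$, this gives an invariant line $L\cong\mathbb{R}$ on which $G$ lands in the translation subgroup of $\mathrm{Isom}(L)$, which is exactly property A$\mathbb{R}$; here the $\Delta'$-connected dense condition is the combinatorial heart that upgrades the local pairwise information to a global line.

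I expect the main obstacle to be the second paragraph in the genuine $\mathbb{R}$-tree setting. Unlike simplicial trees, $\mathbb{R}$-trees carry no discreteness, so the overlap/gap length formulas and the claim that a minipotent word has an axis really distinct from $A_{s_i}$ and $A_{s_j}$ must be handled carefully in the presence of degenerate (zero- or positive-length) overlaps, limiting configurations, and the possible absence of a canonical minimal invariant subtree. Establishing coherence---ruling out inversions uniformly rather than case by case---is where I expect the delicate work to lie, and it is precisely this point that the commuting minipotent words are designed to control.
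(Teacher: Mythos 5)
The first thing to note is that the paper does not prove this statement at all: it is quoted wholesale from Culler and Vogtmann \cite[Theorem 2.3]{CV}, so your attempt can only be measured against their published argument. At the level of strategy you have reconstructed that argument correctly: classify isometries of the $\mathbb{R}$-tree into elliptics and hyperbolics with characteristic sets; use the commuting minipotent words certifying the edges of $\Delta(G,S)$, together with the translation-length formulas, to force the characteristic sets of the generators to intersect pairwise in every action; use the Helly property for subtrees to conclude that a fixed-point-free action must make some generator hyperbolic; and then use $\Delta'$-connectivity and denseness to promote one axis to a $G$-invariant line of translations.

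As a proof, however, the proposal has a genuine hole exactly where the theorem does its work: you never assign any geometric meaning to the relation ``distinguished.'' The engine of the Culler--Vogtmann argument is a lemma of the shape: if $(a,b)$ is distinguished and $b$ acts hyperbolically with axis $A_b$, then $a$ leaves $A_b$ invariant (otherwise the iterated commutators $[a,b^{(k)}]$ are hyperbolic with axes thrown off $A_b$ and cannot commute with $b$). Without stating and proving something of this kind, your second step --- ``propagate coherent axis-sharing along a $\Delta'$-connected subset and, via denseness, across the remaining elements of $S$'' --- is a paraphrase of the desired conclusion rather than an argument; in particular you never use the precise, asymmetric form of denseness (for $x\in S\setminus V$ there exist $v,v'\in V$ with $(v,x)$ \emph{and} $(x,v')$ distinguished), whose two orientations are what handle the two cases in which $x$, respectively the elements of $V$, are the hyperbolic ones. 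Two further points are left open. First, you flag but do not close the reflection problem: an elliptic generator could preserve the invariant line while reversing it, which would violate the ``acts by translations'' clause of property A$\mathbb{R}$; excluding this (a reflection would conjugate a hyperbolic generator with that axis to one translating the line in the opposite direction, contradicting the commutations carried by the edges) is part of the theorem, not a remark. Second, in the minipotent step your deduction runs backwards: from $[w,s_i]=1$ one obtains that $s_i$ preserves $A_w$ (not that $w$ respects the axis and direction of $s_i$), and the contradiction with disjoint or incoherently overlapping characteristic sets still requires the standard bounded-overlap argument; relatedly, the Helly step needs finitely many fixed subtrees, i.e.\ $S$ finite as in the ambient setting of \cite{CV}, which you never invoke.
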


The following is the key lemma.
\begin{lemma}\label{AR_lem}
Let $G_{\mathcal{R}}$ be a $5$-ring group with $\mathcal{F}=\{r_i\}_{1\leq i \leq 5}$ such that
\begin{itemize} 
\item[(i)] $\partial_{+}\supp(r_i)=\partial_{-}\supp(r_{i+2}) \quad(1\leq i\leq 5)$, 
\item[(ii)] $r_{i+1}r_i(\partial_{-}\supp(r_{i+1}))=\partial_{-}\supp(r_{i+2}) \quad(1\leq i\leq 4)$, 
\end{itemize}
where indices are considered modulo $5$.
Then $G_{\mathcal{R}}$ has Property A$\mathbb{R}$.
\end{lemma}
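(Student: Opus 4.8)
The plan is to apply the Culler--Vogtmann criterion, Theorem~\ref{CV_thm}, to $G_{\mathcal{R}}$ with its generating set $S=\{r_1,\ldots,r_5\}$. I would in fact prove the single clean statement that \emph{every} pair $\{r_i,r_j\}$ is an edge of $\Delta(G_{\mathcal{R}},S)$ witnessed by a minipotent word commuting with $r_i$ or $r_j$, and that both $(r_i,r_j)$ and $(r_j,r_i)$ are distinguished. This gives at once that $\Delta(G_{\mathcal{R}},S)$ is complete and that $\Delta'(G_{\mathcal{R}},S)$ equals it, so that for each conjugacy class $C\subset S$ the whole set $V=C$ is $\Delta'$-connected (it spans a complete subgraph) and dense (both directions being distinguished, the density requirement is automatically satisfied for each $x\in S\setminus C$). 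Both hypotheses of Theorem~\ref{CV_thm} then hold regardless of how the $r_i$ distribute into conjugacy classes. Writing $a_i=\partial_{-}\supp(r_i)$, I would first record that condition (i) forces $\supp(r_i)=(a_i,a_{i+2})$, so that consecutive supports overlap while the supports of non-consecutive generators meet in at most one boundary point.

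For the five non-consecutive pairs $\{r_i,r_j\}$ the open supports are disjoint, hence $r_i$ and $r_j$ commute. The minipotent word $r_ir_j$ then commutes with $r_i$, so $\{r_i,r_j\}\in E$; and since $[r_i,r_j]=1$ commutes with both generators, both $(r_i,r_j)$ and $(r_j,r_i)$ are distinguished. These diagonals already form the pentagram $r_1\!-\!r_3\!-\!r_5\!-\!r_2\!-\!r_4\!-\!r_1$, so $S$ is $\Delta'$-connected on its own.

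It remains to treat the five consecutive pairs. For $\{r_i,r_{i+1}\}$ with $1\le i\le 4$, condition (ii) is exactly the equality instance of hypothesis (\ref{strong_condition}) (using (i) to rewrite $\partial_{-}\supp(r_{i+2})=\partial_{+}\supp(r_i)$), so Lemma~\ref{2chain_lem} gives $[r_i,c]=1$ for $c=(r_{i+1}r_i)r_{i+1}(r_{i+1}r_i)^{-1}$ and identifies $\langle r_i,r_{i+1}\rangle$ with $F$ on its standard generating pair. Then $d:=r_i^{-1}c=r_i^{-1}r_{i+1}r_ir_{i+1}r_i^{-1}r_{i+1}^{-1}$ is an alternating word of even length, hence minipotent, and $r_id=c=dr_i$, so $d$ commutes with $r_i$ and $\{r_i,r_{i+1}\}\in E$. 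For $\{r_5,r_1\}$, which is not covered by condition (ii), the ring-group axiom still provides $\langle r_5,r_1\rangle\cong F$ with $(r_5,r_1)$ a chain pair. Since $F$ on its standard generating pair $\{x_0,x_1\}$ satisfies the hypotheses of Theorem~\ref{CV_thm}, and since $x_0,x_1$ are not conjugate in $F$, the criterion forces $\{x_0,x_1\}$ to be an edge and \emph{both} $(x_0,x_1)$ and $(x_1,x_0)$ to be distinguished; I would transfer these properties along the standard identification to conclude that each consecutive pair lies in $E$ and is distinguished in both directions.

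Putting the two cases together, every pair of generators is an edge witnessed by a commuting minipotent word and is distinguished in both directions, whence $\Delta'=\Delta$ is complete and Theorem~\ref{CV_thm} yields property A$\mathbb{R}$. I expect the main obstacle to be exactly the consecutive pairs, and above all the pair $\{r_5,r_1\}$ for which condition (ii) is unavailable: the delicate point is to justify that the abstract isomorphism $\langle r_5,r_1\rangle\cong F$ carries the marking $(r_5,r_1)$ to the standard generating pair, since being an edge and being distinguished are properties of the \emph{marked} group rather than of the abstract group $F$. The second technical point, needed to make the conjugacy-class bookkeeping disappear, is to establish that the consecutive pairs are distinguished in \emph{both} directions; this either follows from the transfer just described or can be verified directly by an iterated-commutator computation inside the chain $\langle r_i,r_{i+1}\rangle$ using the equality in condition (ii).
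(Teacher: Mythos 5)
Your proposal has a genuine gap, and it sits exactly where you predicted trouble: the claim that every pair of generators is distinguished \emph{in both directions}, so that $\Delta'=\Delta$ and the dense subsets come for free. The transfer-from-$F$ argument you use for $\{r_5,r_1\}$ does not work. Theorem~\ref{CV_thm} is only a sufficient criterion, so nothing ``forces'' its hypotheses to hold for $(F,\{x_0,x_1\})$; you simply assert them, and in fact they are false: $(x_1,x_0)$ is \emph{not} distinguished in $F$ with its standard generating pair. Indeed, $[x_1,x_0^{(k)}]$ lies in $[F,F]$ for every $k$, every nontrivial element of $[F,F]$ has support compactly contained in $(0,1)$ (slopes $1$ at both endpoints plus piecewise linearity), and since $x_0$ has no fixed point in $(0,1)$, no nontrivial compactly supported element commutes with $x_0$; by induction the iterated commutators $[x_1,x_0^{(k)}]$ are never trivial and never commute with $x_0$. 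This is independent of the marking problem you yourself flagged (the ring-group axiom gives only an \emph{abstract} isomorphism $\langle r_5,r_1\rangle\cong F$, with no control of where the ordered pair $(r_5,r_1)$ goes, while ``edge'' and ``distinguished'' are properties of the marked pair). The same defect infects the pairs $(r_i,r_{i+1})$, $1\le i\le 4$: Lemma~\ref{2chain_lem} produces a commuting minipotent word, hence the edge in $\Delta$ (your computation with $d=r_i^{-1}c$ is fine), but distinguishedness is an iterated-commutator condition that neither Lemma~\ref{2chain_lem} nor condition (ii) delivers, and your fallback ``direct iterated-commutator computation'' is precisely the step that fails. Without it your density argument collapses: if the $r_i$ are pairwise non-conjugate, each conjugacy class in $S$ is the singleton $\{r_i\}$, and density then demands that $(r_i,r_{i\pm1})$ and $(r_{i\pm1},r_i)$ all be distinguished.

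The paper's proof avoids this entirely, and the device it uses is the idea missing from your proposal: enlarge the generating set to $S=\{r_i,r'_i\}_{1\le i\le 5}$, where $r'_i=\left(r_{i+2}^2r_{i+1}^2r_i^2r_{i-1}\right)r_i\left(r_{i+2}^2r_{i+1}^2r_i^2r_{i-1}\right)^{-1}$ is a conjugate of $r_i$ whose support, by (i) and (ii), is squeezed into $\supp(r_{i+2})\cap\supp(r_{i+3})$. Consequently $[r_i,r'_i]=1$, each conjugacy class in $S$ contains the commuting set $V_i=\{r_i,r'_i\}$ (automatically $\Delta'$-connected, since commuting pairs are distinguished in both orders), and density of $V_i$ is checked purely through commutation: every $x\in S\setminus V_i$ commutes with $r_i$ or with $r'_i$. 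The only genuinely new edges, $\{r'_i,r_{i+2}\}$ and $\{r'_i,r_{i+3}\}$, follow from explicit disjoint-support computations yielding $[r_{i+2}^{-1}r'_ir_{i+2},r'_i]=1$ and $[r_{i+3}^{-1}r'_ir_{i+3},r'_i]=1$. One further remark: the paper's own computations (including the edge for $\{r_5,r_1\}$ and the boundary calculations for $\partial_{-}\supp(r'_i)$) actually invoke (ii) at all indices modulo $5$, and the construction in Theorem~\ref{AR_thm} supplies the missing instance $r_1r_5\left(\partial_{-}\supp(r_1)\right)=\partial_{-}\supp(r_2)$; so the range $1\le i\le 4$ in the statement is in effect a typo, and the case you tried to patch by abstract transfer from $F$ is handled by the same Lemma~\ref{2chain_lem} computation as the other consecutive pairs. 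You diagnosed the hard spot correctly, but the resolution requires auxiliary conjugate generators, not the claim $\Delta'=\Delta$.
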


\begin{proof}
In the following, the index $i$ is considered modulo $5$.
Let 
\begin{align}\label{r'_i}
r'_i=\left(r_{i+2}^2 r_{i+1}^2 r_i^2 r_{i-1}\right) r_i \left(r_{i+2}^2 r_{i+1}^2 r_i^2 r_{i-1}\right)^{-1} \quad(1\leq i\leq 5).
\end{align}
Let 
$$S=\{r_i, r'_i\}_{1\leq i\leq 5}.$$
We show that $G_{\mathcal{R}}$ and $S$ satisfy the requirements in Theorem~\ref{CV_thm}.

First, we show that 
\begin{align}\label{(i)}
\supp(r'_i)\subset \supp(r_{i+2})\cap \supp(r_{i+3}) \quad(1\leq i\leq 5).
\end{align} 
For every $i$, 
according to $(\ref{r'_i})$ and the assumption (i),
\begin{align}
\begin{aligned}\label{(i)-2}
\partial_{+}\supp(r'_{i})&=r_{i+2}^2 r_{i+1}^2 r_i^2 r_{i-1}\left(\partial_{+}\supp(r_i)\right)\\
&=r_{i+2}^2 r_{i+1}^2\left(\partial_{+}\supp(r_i)\right)\\
&=r_{i+2}^2 r_{i+1}^2\left(\partial_{-}\supp(r_{i+2})\right)\\
&\in \left(r_{i+2} r_{i+1}\partial_{-}\supp(r_{i+2}), \partial_{+}\supp(r_{i+2})\right) \\
&= \left(\partial_{-}\supp(r_{i+3}), \partial_{+}\supp(r_{i+2})\right).
\end{aligned}
\end{align}
Here, note that $\partial_{+}\supp(r_i)\notin \supp(r_{i-1})\cup \supp(r_i)$.
According to $(\ref{r'_i})$ and the assumption (ii),
\begin{align}
\begin{aligned}\label{(i)-1}
\partial_{-}\supp(r'_{i})&=r_{i+2}^2 r_{i+1}^2 r_i^2 r_{i-1}\left(\partial_{-}\supp(r_i)\right)\\
&=r_{i+2} (r_{i+2} r_{i+1}) (r_{i+1} r_i) (r_i r_{i-1})\left(\partial_{-}\supp(r_{i})\right)\\
&=r_{i+2} (r_{i+2} r_{i+1}) (r_{i+1} r_i) \left(\partial_{-}\supp(r_{i+1})\right) \\
&= r_{i+2} (r_{i+2} r_{i+1})\left(\partial_{-}\supp(r_{i+2})\right) \\
&= r_{i+2} \left(\partial_{-}\supp(r_{i+3})\right) \\
&\in  \left(\partial_{-}\supp(r_{i+3}), \partial_{+}\supp(r_{i+2})\right).
\end{aligned}
\end{align}
$(\ref{(i)})$ follows from $(\ref{(i)-2})$ and $(\ref{(i)-1})$. According to $(\ref{(i)})$,
\begin{align*}
[r'_i, r]=1 \quad(r\in S\setminus \{r_{i+2}, r_{i+3}\}).
\end{align*}

Second, we show that each $r_i$ satisfies the following:
\begin{align}\label{(ii)}
\supp(r'_i) \cap r_{i+2}^{-1} \left(\supp(r'_i)\right)=\emptyset. 
\end{align}
By $(\ref{(i)-2})$ and  $r_{i+1}^2(\partial_{-}\supp(r_{i+2})) \in \supp(r_{i+1})$, 
\begin{align}
\begin{aligned}\label{(ii)-1}
r_{i+2}^{-1} \left(\supp(r'_i)\right) &=\left(r_{i+2}^{-1} (\partial_{-}\supp(r'_{i})), r_{i+2}^{-1} (\partial_{-}\supp(r'_{i}))\right)\\
&=\left(r_{i+2}^{-1} (\partial_{-}\supp(r'_{i})), r_{i+2}r_{i+1}^2 (\partial_{-}\supp(r_{i+2}))\right)\\
&\subset \left(r_{i+2}^{-1} (\partial_{-}\supp(r'_{i})), r_{i+2} (\partial_{+}\supp(r_{i+1}))\right)\\
&=\left(r_{i+2}^{-1} (\partial_{-}\supp(r'_{i})), r_{i+2} (\partial_{-}\supp(r_{i+3}))\right)\\
&=\left(r_{i+2}^{-1} (\partial_{-}\supp(r'_{i})), r_{i+2} (\partial_{-}\supp(r'_{i}))\right).
\end{aligned}
\end{align}
$(\ref{(ii)})$ follows from $(\ref{(ii)-1})$.
According to $(\ref{(ii)})$,
\begin{align}\label{(ii)'}
[r_{i+2}^{-1}r'_i r_{i+2}, r'_i]=1.
\end{align}

Third, we show that each $r_i$ satisfies the following:
\begin{align}\label{(iii)}
\supp(r'_i) \cap r_{i+3} \left(\supp(r'_i)\right)=\emptyset. 
\end{align}
By $(\ref{(i)-1})$, (i) and (ii),
\begin{align}
\begin{aligned}\label{(iii)-1}
\partial_{-}r_{i+3} \left(\supp(r'_i)\right) &=r_{i+3} \left(\partial_{-}\supp(r'_i)\right) \\
&= r_{i+3}r_{i+2}(\partial_{-}\supp(r_{i+3}))=\partial_{-}\supp(r_{i+4})\\
&=\partial_{+}\supp(r_{i+2}).
\end{aligned}
\end{align}
$(\ref{(iii)})$ follows from $(\ref{(i)})$ and $(\ref{(iii)-1})$.
According to $(\ref{(iii)})$,
\begin{align}\label{(iii)'}
[r_{i+3}^{-1}r'_i r_{i+3}, r'_i]=1.
\end{align}

Finally, we confirm the requirements in Theorem~\ref{CV_thm} for $G_{\mathcal{R}}$ and $S$.
We begin by considering $\Delta(G,S)$. 
In $\Delta(G,S)$, $r_i$ is connected to every $r_{j}$ with $|i-j|>1$, according to the requirements for ring groups.
Also, $r_i$ and $r_{i+1}$ are connected by an edge in $\Delta(G,S)$.
In fact, by the assumption (ii) and Lemma~\ref{2chain_lem},
\begin{align*}
[r_{i}, \left(r_{i+1}r_i\right)r_{i+1}\left(r_{i+1}r_i\right)^{-1}]=1,
\end{align*}
where $[r_{i}, \left(r_{i+1}r_i\right)r_{i+1}\left(r_{i+1}r_i\right)^{-1}]$ is a minipotent word in $\{r_{i}, r_{i+1}\}$.
According to $(\ref{(i)})$, $r'_i$ commutes with every $r\in S\setminus \{r_{i+2}, r_{i+3}\}$. Thus it is connected to every $r\in S\setminus \{r_{i+2}, r_{i+3}\}$ by an edge in $\Delta(G,S)$.
According to $(\ref{(ii)'})$, $r'_i$ is connected to $r_{i+2}$ in $\Delta(G,S)$.
According to $(\ref{(iii)'})$, $r'_i$ is connected to $r_{i+3}$ in $\Delta(G,S)$.
We note that $[r_{i+2}^{-1}r'_i r_{i+2}, r'_i]$ and $[r_{i+3}r'_i r_{i+3}, r'_i]$ are minipotent words in $\{r'_{i}, r_{i+2}\}$ and $\{r'_{i}, r_{i+3}\}$, respectively.
Therefore, $\Delta(G,S)$ is a complete graph.
Next, we consider $\Delta'(G,S)$.
For every $1\leq i\leq 5$, let $V_i=\{r_i,r'_i\}\subset S.$
Since $r'_i$ is defined as a conjugate of $r_i$, for each conjugacy class in $S$, there exists $i$ such that $V_i\subset S$.
Since $[r_i,r'_i]=1$, the subgraph of $\Delta'(G,S)$ spanned by $\{r_i, r'_{i}\}$ is connected. In other words, $V_i$ is $\Delta'$-connected.
For every $x\in S\setminus \{r_i, r'_{i}\}$, there exists $r\in \{r_i, r'_{i}\}$ such that $x$ commutes with $r$.
In fact, if $x\neq r_{i-1}$ or $r_{i+1}$, then $x$ commutes with $r_i$.
If $x= r_{i-1}$ or $r_{i+1}$, then $x$ commutes with $r'_i$, according to $(\ref{(i)})$.
Therefore, $(x,r), (r,x)\in S\times S$ are distinguished,
and thus $V_i$ is dense.
Applying Theorem~\ref{CV_thm} for $G_{\mathcal{R}}$ and $S$, $G_{\mathcal{R}}$ has property A$\mathbb{R}$.
\end{proof}

\begin{theorem}[{\cite[Theorem 4.5, see also Lemma 4.2]{KKL}}]\label{embedding_thm}
Let $n\in \mathbb{N}$.
Let $H$ be an $n$-generated subgroup of $\mathrm{Homeo}^{+}(\mathbb{R})$.
We define $a,b\in \mathrm{Homeo}^{+}(\mathbb{R})$ by
\begin{align*}
&a(x)=x+1, \\
&b(x)=
\begin{cases}
x &(x\leq 0)\\
2x &(0<x<1)\\
x+1 &(x\geq 1).
\end{cases}
\end{align*}
Then, $\langle H,a,b \rangle$ is an $(n+2)$-chain group $G_{\mathcal{F}}$ acting minimally on its support with $\mathcal{F}=\{f_i\}_{1\leq i \leq n+2}$ such that
\begin{itemize} 
\item[(i)] $\partial_{+}\supp(f_i)=\partial_{-}\supp(f_{i+2}) \quad(1\leq i\leq n)$, 
\item[(ii)] $f_{i+1}f_i(\partial_{-}\supp(f_{i+1}))=\partial_{-}\supp(f_{i+2}) \quad(1\leq i\leq n)$. 
\end{itemize}
\end{theorem}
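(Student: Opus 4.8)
The plan is to follow the strategy of Kim--Koberda--Lodha: exhibit an explicit generating set $\mathcal{F}=\{f_i\}_{1\le i\le n+2}$ of $\langle H,a,b\rangle$ whose supports form an $(n+2)$-chain of intervals satisfying (i) and (ii), and then to read off the remaining assertions from that configuration. The key reduction is that, \emph{once} the $f_i$ and their supports are in hand, the chain-group property is essentially free: conditions (i) and (ii) together give
\[
f_{i+1}f_i(\partial_{-}\supp(f_{i+1}))=\partial_{-}\supp(f_{i+2})=\partial_{+}\supp(f_i),
\]
which is exactly the strong condition $(\ref{strong_condition})$ of Lemma~\ref{2chain_lem} for the pair $(f_i,f_{i+1})$; hence each consecutive pair generates a copy of Thompson's group $F$ and $G_{\mathcal{F}}$ is a genuine $(n+2)$-chain group. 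So the whole problem concentrates on producing the generators and on the two endpoint identities.

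For the construction I would first re-realise $H$ as a group of orientation-preserving homeomorphisms supported in the bounded interval $(0,1)$ (identifying $\mathbb{R}$ with $(0,1)$), so that $\supp(H)\subseteq(0,1)$; its $n$ generators then become the ``inner'' data out of which $f_1,\dots,f_n$ are built, each arranged to have connected support overlapping its neighbours, while the two extra generators are responsible for the two unbounded ends of the chain and for the self-similar dynamics. Here the roles of $a$ and $b$ are essential: the translation $a(x)=x+1$ produces the shift that interlocks consecutive supports so that $\partial_{+}\supp(f_i)=\partial_{-}\supp(f_{i+2})$ (condition (i)), while the Thompson-type map $b$, which fixes $(-\infty,0]$ and expands $(0,\infty)$, supplies the ``doubling'' needed both to realise each pair as $F$ and to force condition (ii). Because $\supp(a)=\mathbb{R}$, the support of $G_{\mathcal{F}}$ is all of $\mathbb{R}$, so the chain must run out to $\pm\infty$ at its two ends and no single $f_i$ may have support $\mathbb{R}$.

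That $H$ is a subgroup of $G_{\mathcal{F}}=\langle H,a,b\rangle$ is immediate, and the normalised copy of $H$ is genuinely embedded since it is unaltered on its support. Minimality of the action on $\supp(G_{\mathcal{F}})=\mathbb{R}$ I would verify directly from the dynamics: using $a$ one can translate any bounded interval arbitrarily far in either direction, and using conjugates of $b$ one can rescale, so the orbit of every point is dense and no proper nonempty closed invariant subset can survive.

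The step I expect to be the main obstacle is the explicit choice of the $f_i$ together with the verification of the endpoint identity (ii): one must simultaneously give each $f_i$ a single-interval support so that non-adjacent supports are disjoint and adjacent ones overlap \emph{properly} (the conditions (C1), (C2)), check that the group generated is exactly $\langle H,a,b\rangle$ rather than something larger or smaller, and evaluate $f_{i+1}f_i$ on $\partial_{-}\supp(f_{i+1})$ so that it lands precisely on $\partial_{-}\supp(f_{i+2})$. These are the orbit computations carried out in \cite[Lemma 4.2]{KKL}, and I would import that construction essentially verbatim, checking only that the specific $a$ and $b$ above yield the normalised endpoints recorded in (i) and (ii).
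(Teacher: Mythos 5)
The paper never proves this statement: Theorem~\ref{embedding_thm} is quoted as a black box from Kim--Koberda--Lodha (\cite[Theorem 4.5, Lemma 4.2]{KKL}), so there is no internal proof to match your argument against, and your ultimate move --- ``import that construction essentially verbatim'' from \cite{KKL} --- is in effect exactly what the paper does. In that sense your framing is consistent with the paper. But judged as a proof attempt, the proposal is incomplete in the one place where all the content lives: the family $\mathcal{F}=\{f_i\}$ is never defined, so the verification of (C1), (C2), of the identities (i) and (ii), of the equality $\langle \mathcal{F}\rangle=\langle H,a,b\rangle$, and of minimality are all pointed at rather than carried out. Your minimality sketch (translate with $a$, rescale with conjugates of $b$) is plausible --- indeed $\langle a,b\rangle$ is the standard copy of $F$ acting on $\mathbb{R}$, which already acts minimally --- but this too is part of what \cite{KKL} establishes, not something your text derives.

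One concrete overreach is worth flagging: you claim that once (i) and (ii) hold, ``the chain-group property is essentially free'' because each consecutive pair satisfies the strong condition $(\ref{strong_condition})$ of Lemma~\ref{2chain_lem}. This reduction is valid only for the pairs $(f_i,f_{i+1})$ with $1\leq i\leq n$: condition (ii) has no instance $i=n+1$, since that would refer to a nonexistent $f_{n+3}$ (the indices here are not cyclic --- this is a chain, not a ring). So the pair $(f_{n+1},f_{n+2})$ is not covered, and it is genuinely the delicate one: as you yourself observe, $a\in G_{\mathcal{F}}$ forces $\supp(G_{\mathcal{F}})=\mathbb{R}$, so $\supp(f_1)$ and $\supp(f_{n+2})$ are half-lines, and the isomorphism $\langle f_{n+1},f_{n+2}\rangle\cong F$ must be checked directly from the explicit construction (as it is in \cite{KKL}) rather than read off from (i) and (ii). Your reduction also tacitly assumes the supports already form a chain, which again only the explicit construction can deliver. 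In short: the glue you supply is mostly correct but covers $n$ of the $n+1$ pairs, and the load-bearing construction is entirely deferred to the citation --- which makes your proposal a correct reading of where the proof lives, but not a proof.
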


\begin{proof}[Proof of Theorem~\ref{AR_thm}]
Let $H$ be a $2$-generated subgroup of $\mathrm{Homeo}^+(\mathbb{R})$.
By Theorem~\ref{embedding_thm}, there exists a $4$-chain group $G_{\mathcal{F}}$ acting minimally on its support such that 
\begin{itemize} 
\item[(i)] $\partial_{+}\supp(f_i)=\partial_{-}\supp(f_{i+2}) \quad(1\leq i\leq 2)$, 
\item[(ii)] $f_{i+1}f_i(\partial_{-}\supp(f_{i+1}))=\partial_{-}\supp(f_{i+2}) \quad(1\leq i\leq 2)$
\end{itemize}
and that $H$ embeds into $G_{\mathcal{F}}$.

We fix a basepoint $p$ on $S^1$ and an orientation preserving homeomorphism $\phi$ from the open interval $\left(\partial_{-}\supp(f_{1}),\partial_{+}\supp(f_{4})\right)$ to $S^1\setminus \{p\}$.
Let
$$r_i=\phi\circ f_i\circ \phi^{-1} \quad(1\leq i\leq 4).$$
Note that
\begin{align*}
\supp(r_i)=
\begin{cases}
(p,\phi(\partial_{-}\supp(f_{3}))) &(i=1)\\
(\phi(\partial_{-}\supp(f_{i})),\phi(\partial_{+}\supp(f_{i}))) &(i=2,3)\\
(\phi(\partial_{+}\supp(f_{2})), p) &(i=4).
\end{cases}
\end{align*}
Take $r_5\in \mathrm{Homeo}^+(S^1)$ such that 
$\supp(r_5)=(\phi(\partial_{+}\supp(f_{3})),\phi(\partial_{-}\supp(f_{2})))$, $r_5r_4(\phi(\partial_{-}\supp(f_{4})))=p$ and $r_1r_5(p)=\phi(\partial_{-}\supp(f_{2}))$.
Let  $G$ be a $5$-ring group generated by a sequence $\{r_1, r_2, r_3, r_4, r_5\}$ satisfying requirements in Lemma~\ref{AR_lem}.
Thus $G$ is a group with property A$\mathbb{R}$, containing $H$ as a subgroup of a $4$-chain subgroup which acts minimally on its support.
\end{proof}

\subsection{Uncountably many finitely generated simple subgroups of $\mathrm{Homeo}^{+}(S^1)$}\label{uncountable_sec}
In this section, we show Corollary~\ref{uncountable2_cor}.

\begin{theorem}[{\cite[Lemma 5.2, see also Lemma 2.7]{KKL}}]\label{uncountable_thm}
There exists a set $\mathcal{N}$ of $2$-generated subgroups of $\mathrm{Homeo}^{+}([0,1/2])$,
consisting of uncountably many isomorphism types of groups.
\end{theorem}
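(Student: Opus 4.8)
The plan is to decouple the construction from the bookkeeping by passing to the space of marked $2$-generated groups and invoking a cardinality argument, so that the only thing one must genuinely \emph{build} is an uncountable family of homeomorphisms whose associated relation sets are pairwise distinct. Concretely, for a pair $(a,b)\in \mathrm{Homeo}^+([0,1/2])^2$ let $N(a,b)\trianglelefteq F_2$ be the kernel of the surjection $F_2\to \langle a,b\rangle$ sending the free generators to $a$ and $b$. The key observation is the following cardinality fact: if $G$ is any fixed countable group, then $G$ admits at most $|G|^2=\aleph_0$ ordered generating pairs, and each such pair determines a kernel as the kernel of the induced map $F_2\to G$; hence at most countably many normal subgroups $N\trianglelefteq F_2$ satisfy $F_2/N\cong G$. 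Consequently, if I can exhibit a family $\{(a_t,b_t)\}_{t\in T}$ indexed by an \emph{uncountable} set $T$ for which the kernels $N(a_t,b_t)$ are pairwise distinct, then the groups $\langle a_t,b_t\rangle$ fall into uncountably many isomorphism types: were there only countably many types, some single type $G$ would be shared by uncountably many indices $t$, contradicting that $G$ accounts for only countably many kernels.

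First I would fix a sequence of pairwise disjoint open subintervals $I_1,I_2,\dots \subset (0,1/2)$, say accumulating at $1/2$, and build $a_t,b_t$ with $\supp(a_t),\supp(b_t)\subset \bigcup_n I_n$ and with each $I_n$ invariant under both maps. Then $\langle a_t,b_t\rangle$ embeds into the restricted product $\prod_n \langle a_t|_{I_n},b_t|_{I_n}\rangle$, and a word $w\in F_2$ lies in $N(a_t,b_t)$ if and only if its restriction to every block $I_n$ is trivial; this disjoint-support independence is exactly the mechanism underlying the commutator computations of the present paper (cf.\ the supports-disjoint arguments behind Lemma~\ref{2chain_lem} and Proposition~\ref{minimal_ring_prop}). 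Indexing by $T=\{0,1\}^{\mathbb N}$, I would, for each $\varepsilon=(\varepsilon_n)$, tune the restriction of $(a_\varepsilon,b_\varepsilon)$ to the $n$-th block so as to switch a prescribed relation on or off according to $\varepsilon_n$: for instance, let the pair restricted to $I_n$ generate an abelian configuration when $\varepsilon_n=0$ and a non-abelian (say free) configuration when $\varepsilon_n=1$. The remaining task is to manufacture, for each $n$, a single word $u_n\in F_2$ that is \emph{localized} to the $n$-th block, meaning $u_n$ acts trivially on every $I_m$ with $m\neq n$ regardless of $\varepsilon$, while on $I_n$ it is trivial precisely when $\varepsilon_n=0$. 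Such $u_n$ can be produced by the standard Thompson/chain-group device of conjugating a fixed commutator by words that push its support into a single block; then $u_n\in N(a_\varepsilon,b_\varepsilon)$ exactly when $\varepsilon_n=0$, so the map $\varepsilon\mapsto N(a_\varepsilon,b_\varepsilon)$ is injective and its image is uncountable.

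Combining the two steps gives uncountably many pairwise distinct kernels realized inside $\mathrm{Homeo}^+([0,1/2])$, hence uncountably many isomorphism types of $2$-generated subgroups, which is the assertion (with $\mathcal N=\{\langle a_\varepsilon,b_\varepsilon\rangle\}$ the resulting family, or a choice of one representative per type). The main obstacle I anticipate is the \emph{localization} step: arranging that a \emph{fixed} word $u_n$ simultaneously detects block $n$ and is insensitive to all other blocks for \emph{every} parameter $\varepsilon$, since altering $\varepsilon_m$ changes the dynamics on $I_m$ and one must guarantee this never revives $u_n$ there. Disjointness of supports is what makes this possible in principle—any word's effect factors over the blocks—but one has to choose the block-gadgets and the conjugators defining $u_n$ compatibly with a common scale on each $I_n$, so that the conjugators themselves have controlled support and do not spill between blocks; verifying this independence uniformly in $\varepsilon$ is the delicate point, whereas the cardinality argument and the final counting are then immediate.
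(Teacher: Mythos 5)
Your global architecture is sound and is essentially the standard one (the paper itself gives no proof of this statement --- it imports it from \cite[Lemma 2.7 and Lemma 5.2]{KKL}, whose proof likewise runs through disjoint invariant blocks, per-block switches, and a kernel-counting argument in $F_2$). In particular, your cardinality reduction is correct: a fixed countable group admits at most countably many ordered generating pairs, hence at most countably many normal subgroups $N\trianglelefteq F_2$ with $F_2/N\cong G$, so uncountably many pairwise distinct kernels force uncountably many isomorphism types. The observation that, with $\supp(a_\varepsilon),\supp(b_\varepsilon)\subset\bigcup_n I_n$ and each $I_n$ invariant, a word lies in $N(a_\varepsilon,b_\varepsilon)$ if and only if it evaluates trivially on every block, is also correct.

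The genuine gap is in your choice of block gadgets, and it is fatal as stated rather than merely delicate. If the pair $\bigl(a_\varepsilon|_{I_m},\,b_\varepsilon|_{I_m}\bigr)$ generates a \emph{free} group on a block $I_m$ with $\varepsilon_m=1$, then the only word of $F_2$ acting trivially on $I_m$ is the trivial word; consequently no nontrivial word $u_n$ can be ``localized away'' from such a block. Worse, since $N(a_\varepsilon,b_\varepsilon)=\bigcap_m \ker\bigl(F_2\to\langle a_\varepsilon|_{I_m},b_\varepsilon|_{I_m}\rangle\bigr)$, the kernel is trivial as soon as a \emph{single} block carries a free configuration, so every $\varepsilon\neq(0,0,\dots)$ yields $\langle a_\varepsilon,b_\varepsilon\rangle\cong F_2$: your family collapses to exactly two isomorphism types. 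The repair is to use non-abelian gadgets that still satisfy uniform relations detectable by support dynamics: for instance, fix words such as $u_n=[a^nba^{-n},b]$ and arrange the supports so that on every block $I_m$ with $m\neq n$ the sets $\supp(b|_{I_m})$ and $a^n\bigl(\supp(b|_{I_m})\bigr)$ are disjoint for all $\varepsilon$ (forcing $u_n$ to act trivially there), while on $I_n$ they overlap, and $u_n$ acts nontrivially, precisely when $\varepsilon_n=1$. With this ``commuting conjugates'' switch in place of the abelian/free dichotomy, your kernel map $\varepsilon\mapsto N(a_\varepsilon,b_\varepsilon)$ becomes injective and the rest of your argument goes through; this is essentially the mechanism in the cited lemma of \cite{KKL}.
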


\begin{lemma}[{\cite[Remark 2.6]{CV}}]\label{fg_lem}
If a group has property A$\mathbb{R}$, then the commutator subgroup is finitely generated.
\end{lemma}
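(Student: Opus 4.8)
The plan is to prove the contrapositive, translating the statement into the language of the Bieri--Neumann--Strebel (BNS) invariant and its description in terms of isometric actions on $\mathbb{R}$-trees. I would treat the case in which $G$ is finitely generated; this is the only case needed here, since the lemma is applied to ring groups, which are finitely generated, and it is the setting in which property A$\mathbb{R}$ is linked to the invariant $\Sigma^1$.

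First I would reduce the statement to a question about the BNS invariant. Let $S(G)=(\mathrm{Hom}(G,\mathbb{R})\setminus\{0\})/\mathbb{R}_{>0}$ be the character sphere of $G$ and let $\Sigma^1(G)\subseteq S(G)$ denote the BNS invariant. The Bieri--Strebel criterion for a normal subgroup $N$ with abelian quotient $G/N$ states that $N$ is finitely generated if and only if $\{[\chi]\in S(G):\chi(N)=0\}\subseteq\Sigma^1(G)$. Applying this with $N=[G,G]$, and noting that every character $\chi\colon G\to\mathbb{R}$ factors through $G^{ab}$ and hence vanishes on $[G,G]$, the relevant subset is all of $S(G)$. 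Thus $[G,G]$ is finitely generated if and only if $\Sigma^1(G)=S(G)$, and it suffices to prove that property A$\mathbb{R}$ forces $\Sigma^1(G)=S(G)$.

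For this last step I would argue by contraposition, using Brown's description of the complement of $\Sigma^1$ via end-fixing actions on $\mathbb{R}$-trees. Suppose $\Sigma^1(G)\neq S(G)$ and choose $\chi$ with $[\chi]\notin\Sigma^1(G)$. By Brown's theorem, a point of $\Sigma^1(G)^c$ is realized by an isometric action of $G$ on an $\mathbb{R}$-tree which fixes a unique end, has Busemann character $\chi$, and admits no invariant line; in particular it has no global fixed point. Such an action has no invariant line on which $G$ acts by translations, so it violates property A$\mathbb{R}$. Hence property A$\mathbb{R}$ forces $\Sigma^1(G)=S(G)$, and by the reduction $[G,G]$ is finitely generated. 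As a consistency check, this matches the extreme cases: for $G=\mathbb{Z}$ one has $\Sigma^1(\mathbb{Z})=S(\mathbb{Z})$ and every fixed-point-free action has an axis, while for $G$ free one has $\Sigma^1(G)=\emptyset$ and the non-linear end-fixing actions abound.

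I expect the main obstacle to be the passage, in the final step, from the purely algebraic condition $[\chi]\notin\Sigma^1(G)$ to a genuinely non-linear action on an $\mathbb{R}$-tree fixing an end: this is the substantive geometric content, and it is precisely Brown's realization of points of $\Sigma^1(G)^c$ by actions on $\mathbb{R}$-trees via valuations. By contrast, the Bieri--Strebel reduction to the equality $\Sigma^1(G)=S(G)$ and the verification that the resulting action contradicts property A$\mathbb{R}$ are formal once this dictionary is in place.
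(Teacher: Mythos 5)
The paper gives no proof of this lemma at all: it is quoted verbatim from Culler--Vogtmann \cite{CV} (their Remark 2.6), and your argument --- reducing via Bieri--Strebel to the equality $\Sigma^1(G)=S(G)$ and then invoking Brown's realization of points of $\Sigma^1(G)^c$ by nontrivial end-fixing actions on $\mathbb{R}$-trees with no invariant translation line --- is precisely the standard justification underlying that remark, so your proposal is correct and matches the intended route. Your restriction to finitely generated $G$ is both faithful to the source (the BNS invariant and Brown's theorem live in the finitely generated setting) and harmless here, since the lemma is only applied to ring groups; in fact, for countable groups property A$\mathbb{R}$ already forces finite generation, since a countable non-finitely-generated group acts without fixed points, with all elements elliptic, on the simplicial tree built from an exhausting chain of proper subgroups, and such an action admits no invariant line with a translation action.
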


\begin{proof}[Proof of Corollary~\ref{uncountable2_cor}]
Let $\mathcal{N}$ be the set of $2$-generated subgroups of $\mathrm{Homeo}^{+}([0,1/2])$ in Theorem~\ref{uncountable_thm}.

First, we construct a ring group $R_N$ for each $N\in \mathcal{N}$ such that 
$N$ embeds into a chain subgroup of $R_N$ which acts minimally on its support, and
$R_N$ has property A$\mathbb{R}$.
We fix $N\in \mathcal{N}$.
By Theorem~\ref{embedding_thm}, 
$\langle N,a,b \rangle$ is a $4$-chain group $G_{\mathcal{F}}$ acting minimally on its support with $\mathcal{F}=\{f_i\}_{1\leq i \leq 4}$ such that
\begin{itemize} 
\item[(i)] $\partial_{+}\supp(f_i)=\partial_{-}\supp(f_{i+2}) \quad(1\leq i\leq 2)$, 
\item[(ii)] $f_{i+1}f_i(\partial_{-}\supp(f_{i+1}))=\partial_{-}\supp(f_{i+2}) \quad(1\leq i\leq 2)$.
\end{itemize}
A similar argument as in the proof of Theorem~\ref{AR_thm} shows that $G_{\mathcal{F}}$ embeds in a $5$-ring group $R_N$ with property A$\mathbb{R}$.

Next, we show that $N$ embeds into the commutator subgroup of $G_{\mathcal{F}}$. 
Let $n_1,n_2$ be generators of $N$. 
Since $n_1$ is the identity outside of $[0,1/2]$, the conjugate of $n^{-1}$ by $a$ is the identity outside of $a[0,1/2]=[1,2/3]$.
Therefore, $n_1(an_1^{-1}a^{-1})=[n_1,a]$ acts as $n_1$ on $[0,1/2]$ and as a copy of $n_1^{-1}$ on $[1,3/2]$.
Similarly, $[n_2,a^{-1}]$ acts as $n_2$ on $[0,1/2]$ and as a copy of $n_2^{-1}$ on $[-1,-1/2]$.
With this observation,
$\langle [n_1,a],[n_2,a^{-1}] \rangle\subset [G_{\mathcal{F}}, G_{\mathcal{F}}]$ is isomorphic to
$N=\langle n_1,n_2 \rangle$.

Finally, we show that the set of commutator subgroups of $R_N$ consists of uncountably many finitely generated simple groups.
Since there exist at most countable finitely generated subgroups for a countable group, 
$\{[R_N,R_N]\}_{N\in \mathcal{N}}$ contains uncountably many groups in terms of group isomorphism.
According to Theorem~\ref{simple_thm}, every $[G_N,G_N]$ is simple.
According to Lemma~\ref{fg_lem}, every $[G_N,G_N]$ is finitely generated.
\end{proof}


\begin{thebibliography}{99}
  \bibitem{Brown}
    K.\ S.\ Brown, \textit{Finiteness properties of groups}, J.\ Pure Appl.\ Algebra, {\bf 44} (1-3), 45--75, 1987.
  \bibitem{CFP}
  J.\ W.\ Cannon, W.\ J.\ Floyd, and W.\ R.\ Parry, \textit{Introductory notes on Richard Thompson's groups}, Enseign.\ Math.\ (2) {\bf 42}, 215--256, 1996.
  \bibitem{CV}
  M.\ Culler, K.\ Vogtmann, \textit{A group-theoretic criterion for property FA}, Proc.\ Amer.\ Math.\ Soc. {\bf 124}, 677--683, 1996.
   \bibitem{Higman} G.\ Higman, On infinite simple permutation groups, Publ. Math. Debrecen 3 (1954), 221--226 (1955).
  \bibitem{HL}
  J.\ Hyde and Y.\ Lodha, \textit{Finitely generated infinite simple groups of homeomorphisms of the real line}, Invent.\ math. {\bf 218}, 83--112, 2019. 
 \bibitem{HLR}
  J.\ Hyde, Y.\ Lodha and C.\ Rivas,
\textit{Two new families of finitely generated simple groups of homeomorphisms of the real line}, Journal of Algebra, {\bf 635}, 1--22,
2023.
    \bibitem{Kato2023}  M.\ Kato, \textit{Semi-simple actions of the Higman-Thompson groups Tn on finite-dimensional CAT(0) spaces}, Geom.\ Dedicata {\bf 217}(2023), no.5, Paper No. 85, 20 pp.
  \bibitem{KKL}
   S.\ Kim, T.\ Koberda and Y.\ Lodha,
   \textit{Chain groups of homeomorphisms of the interval and the circle}, Ann.\ Sci.\ de l'ENS (4) {\bf 52}, 797--820, 2019.
    \end{thebibliography}
\end{document}